\newcommand{\E}{\mathbb{E}} 
\renewcommand{\P}{\mathbb{P}} 
\newcommand{\X}{\mathbf{X}}
\newcommand{\n}{\mathbf{n}} 
\newcommand{\g}{\mathbf{g}} 
\newcommand{\q}{\mathbf{q}} 
\renewcommand{\t}{\mathbf{t}} 
\newcommand{\h}{\mathbf{h}}
\theoremstyle{plain}
\newtheorem{theorem}{Theorem}[section]
\newtheorem{proposition}[theorem]{Proposition}
\newtheorem{lemma}[theorem]{Lemma}
\newtheorem{corollary}[theorem]{Corollary}
\theoremstyle{definition}
\newtheorem{remark}[theorem]{Remark}
\newtheorem{example}[theorem]{Example}
\newcommand\Z{\mathbb{Z}}
\newcommand\N{\mathbb{N}}
\newcommand\C{\mathbb{C}}
\newcommand\eps{\varepsilon}
\begin{document}

\begin{frontmatter}[classification=text]

\title{The Erd\H{o}s discrepancy problem} 

\author[tt]{Terence Tao\thanks{The author is supported by NSF grant DMS-0649473 and by a Simons Investigator Award.}}

\begin{abstract}
We show that for any sequence $f(1), f(2), \dots$ taking values in $\{-1,+1\}$, the discrepancy
$$ \sup_{n,d \in \N} \left|\sum_{j=1}^n f(jd)\right| $$
of $f$ is infinite. This answers a question of Erd\H{o}s.  In fact the argument also applies to sequences $f$ taking values in the unit sphere of a real or complex Hilbert space.  

The argument uses three ingredients.  The first is a Fourier-analytic reduction, obtained as part of the {\tt Polymath5} project on this problem, which reduces the problem to the case when $f$ is replaced by a (stochastic) completely multiplicative function $\g$.  The second is a logarithmically averaged version of the Elliott conjecture, established recently by the author, which effectively reduces to the case when $\g$ usually pretends to be a modulated Dirichlet character.  The final ingredient is (an extension of) a further argument obtained by the {\tt Polymath5} project which shows unbounded discrepancy in this case.
\end{abstract}
\end{frontmatter}

\section{Introduction}

Given a sequence $f\colon \N \to H$ taking values in a real or complex Hilbert space $H$, define the \emph{discrepancy} of $f$ to be the quantity
$$ \sup_{n,d \in \N} \left\| \sum_{j=1}^n f(jd) \right\|_H.$$
In other words, the discrepancy is the largest magnitude of a sum of $f$ along homogeneous arithmetic progressions $\{ d, 2d, \dots, nd\}$ in the natural numbers $\N = \{1,2,3,\dots\}$.

The main objective of this paper is to establish the following result:

\begin{theorem}[Erd\H{o}s discrepancy problem, vector-valued case]\label{hilbert}  Let $H$ be a real or complex Hilbert space, and let $f\colon \N \to H$ be a function such that $\|f(n)\|_H = 1$ for all $n$.  Then the discrepancy of $f$ is infinite.
\end{theorem}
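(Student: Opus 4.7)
The plan is to argue by contradiction: assume $f\colon\N\to H$ takes values on the unit sphere and has bounded discrepancy, say $\sum_{j=1}^n \|f(jd)\|_H \leq C$ (in the appropriate partial-sum sense) for all $n,d$, and derive unbounded discrepancy nonetheless. I would like to reduce the problem from a bounded-discrepancy unit-vector-valued $f$ to a bounded-discrepancy completely multiplicative function $\g\colon\N\to S^1$ (or a stochastic version thereof), and then contradict this using analytic number theory. At the top level this gives the three-step structure advertised in the abstract.

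First I would carry out the Polymath5 Fourier-analytic reduction. The idea is that if $f$ has discrepancy at most $C$, then by averaging and a Cauchy--Schwarz/Fourier argument, one can manufacture from $f$ a scalar completely multiplicative function $\g\colon\N\to \{-1,+1\}$ (or, more robustly, a random/stochastic completely multiplicative $\g$ taking values on the unit circle) whose partial sums $\sum_{n\leq N}\g(n)$ along homogeneous APs are still bounded, or at least bounded in some averaged sense. The key point is that complete multiplicativity is essentially forced by the equation $\sum_{j\leq n} f(jd)-\sum_{j\leq n} f(j(d'))\approx 0$ applied flexibly; this converts the combinatorial problem into a problem about multiplicative functions, which is accessible to number-theoretic tools.

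Second, I would apply the logarithmically averaged Elliott conjecture (established in the author's recent work) to the multiplicative $\g$. The Elliott-type input says that, in a logarithmically averaged sense, a multiplicative function with nontrivial two-point correlations along shifts must pretend (in the Granville--Soundararajan pretentious sense) to be a modulated Dirichlet character $n\mapsto \chi(n)n^{it}$. The partial-sum bound on $\g$ forces exactly such non-trivial correlations, so we conclude that $\g$ must pretend to be $\chi(n)n^{it}$ for some primitive character $\chi$ and real $t$. This is the deepest ingredient and I expect it to be the main obstacle: the reduction from ``bounded discrepancy'' to ``nontrivial short-shift correlations'' needs to be compatible with the hypotheses of the averaged Elliott conjecture, and the stochastic setting from Step~1 must be handled inside that framework.

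Finally, in the pretentious case I would adapt and extend the Polymath5 ``character-like'' argument. When $\g$ pretends to be $\chi(n)n^{it}$, one can explicitly construct a homogeneous AP $\{d,2d,\ldots,nd\}$ on which the partial sum of $\g$ grows (typically like $\log\log$ along prime-power multiples of the conductor of $\chi$, exploiting that $\chi$ vanishes at certain primes so $\g$ behaves like an unbounded sum of constants there). The modulation by $n^{it}$ introduces oscillation, which is why a straight Polymath5 argument does not suffice and an extension is required; one handles it by choosing $d$ and $n$ so that $n^{it}$ is essentially constant on the AP, reducing to the untwisted case. This contradicts the assumed discrepancy bound, completing the proof of Theorem~\ref{hilbert}.
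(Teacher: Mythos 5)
Your three-step architecture is exactly that of the paper: a Fourier-analytic reduction to a stochastic completely multiplicative $\g\colon\N\to S^1$ with $\sup_n \E\left|\sum_{j\le n}\g(j)\right|^2<\infty$, a van der Corput argument feeding the logarithmically averaged Elliott theorem to force $\g$ to pretend to be $\chi(n)n^{it}$, and a generalised Borwein--Choi--Coons analysis in the pretentious case. Steps 1 and 2 are described accurately enough, though the mechanism you give for step 1 is not quite right: multiplicativity is not ``forced'' by comparing sums over different moduli $d$; rather one Fourier-expands the function $(a_1,\dots,a_r)\mapsto f(p_1^{a_1}\cdots p_r^{a_r})$ on $(\Z/M\Z)^r$, each frequency $\xi$ produces the completely multiplicative function $\g_X(p_j)=e(\xi_j/M)$, and $\|\hat F(\xi)\|_H^2$ supplies the probability distribution -- this Plancherel step is precisely where the hypothesis $\|f(n)\|_H=1$ for \emph{all} $n$ enters (cf.\ the failure for Example \ref{dir}).

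The genuine gap is in step 3. The Elliott input only gives $\sum_{p\le X}\bigl(1-\operatorname{Re}\,\g(p)\overline{\chi(p)}p^{-it}\bigr)/p\ll 1$; it does not give $\g(n)=\chi(n)n^{it}$, so you cannot ``explicitly construct a homogeneous AP on which the partial sum of $\g$ grows'' by the direct computation of Example \ref{bcc-ex}. One must factor $\g(n)=\tilde{\chi}(n)\,n^{it}\,\h(n)$ with $\h$ merely pretending to be $1$ (possibly differing from $1$ at infinitely many primes), and the crux is to decouple $\h$ from $\tilde\chi$. The paper does this with a second-moment argument rather than a single progression: it restricts to good residue classes $a\ (q^k)$ on which $\tilde\chi(n+m)$ depends only on $a$, and then uses Euler products and Dirichlet $L$-function bounds to show that $\sum_{n\equiv b\ (q^k)}\h(n)n^{-1-1/\log X}$ is essentially independent of $b$; the contradiction then comes from an almost-orthogonality computation averaged over $a$ and over the scales $q^i<\sqrt{H}$ (yielding divergence like $\log H$, not $\log\log$). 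Moreover, before ``$n^{it}$ is essentially constant on the block'' can work, you need $|t|\ll X^{\delta}$ rather than the $O(X)$ that Elliott provides; the paper needs a separate lemma for this, resting on the Vinogradov--Korobov zero-free region and a contour-shifting argument. Without the $\h$-decoupling and the reduction of $|t|$, the third step as you describe it does not close.
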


Specialising to the case when $H$ is the reals, we thus have

\begin{corollary}[Erd\H{o}s discrepancy problem, original formulation]\label{edp}  Every sequence $f(1), f(2), \dots$ taking values in $\{-1,+1\}$ has infinite discrepancy.
\end{corollary}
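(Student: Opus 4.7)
The Corollary is a one-line consequence of Theorem~\ref{hilbert}: applied with $H = \R$, any $f \colon \N \to \{-1,+1\}$ satisfies $\|f(n)\|_\R = 1$, so its discrepancy is infinite. The substantive task is therefore Theorem~\ref{hilbert}, and I would follow the three-step strategy signaled in the abstract.

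First, suppose for contradiction that some unit-vector valued $f$ has discrepancy at most $C$. I would invoke the \texttt{Polymath5} Fourier-analytic reduction, which converts this into a corresponding bound for a (stochastic) completely multiplicative function $\g$ taking values in the unit sphere of some larger Hilbert space $H'$. Heuristically, by decomposing the indicator function of a homogeneous progression $\{d, 2d, \ldots, nd\}$ in terms of characters and averaging over $d$, the extremal discrepancy problem can be pushed into the multiplicative setting; one verifies that both the unit-norm condition and the discrepancy bound transfer to $\g$.

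Second, with a completely multiplicative $\g$ of bounded discrepancy in hand, I would apply the author's recent logarithmically averaged Elliott-type theorem. That result guarantees that correlations of the form $\frac{1}{\log N}\sum_{n \le N} \langle \g(n), \g(n+h) \rangle / n$ decay as $N \to \infty$ unless $\g$ \emph{pretends} (in the Granville--Soundararajan sense) to be a modulated Dirichlet character $\chi(n) n^{it}$. A discrepancy bound on $\g$ should force such correlations to be nontrivially large on average, so one concludes that $\g$ must, in this pretentious sense, look like a twisted character.

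Third, in this structured case I would adapt a further \texttt{Polymath5} argument: the complete multiplicativity of $\g$ together with its near-character structure allows one to explicitly construct homogeneous progressions $\{d,2d,\ldots,nd\}$ along which the partial sums of $\g$ grow without bound, contradicting the assumed discrepancy bound. I expect the second step to be the main obstacle, since converting a finitary discrepancy bound into an averaged correlation statement clean enough to feed into the logarithmically averaged Elliott theorem is where the analytic heart of the argument lies; by contrast, the first step is largely a packaging exercise, and the third is essentially combinatorial once the pretentious structure of $\g$ is known.
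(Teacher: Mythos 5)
Your proposal matches the paper: Corollary~\ref{edp} is obtained there exactly as you say, by specialising Theorem~\ref{hilbert} to $H=\R$, and your three-step sketch of Theorem~\ref{hilbert} (Fourier-analytic reduction to a stochastic completely multiplicative function, the logarithmically averaged Elliott theorem via a van der Corput correlation argument, and a generalised Borwein--Choi--Coons analysis in the pretentious case) is the paper's actual strategy. The only small imprecision is that the reduction lands on $S^1$-valued (scalar) stochastic completely multiplicative functions rather than functions valued in the unit sphere of a larger Hilbert space, which matters because the Elliott-type input requires scalar multiplicative functions bounded by $1$.
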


This answers a question of Erd\H{o}s \cite{erdos} (see also \v{C}udakov \cite{cudakov} for related questions), which was recently the subject of the {\tt Polymath5} project \cite{polymath}; see the recent report \cite{gowers} on the latter project for further discussion.  

It is instructive to consider some near-counterexamples to these results -- that is to say, functions that are of unit magnitude, or nearly so, which have surprisingly small discrepancy -- to isolate the key difficulty of the problem.

\begin{example}[Dirichlet character]\label{dir}  Let $\chi \colon \N \to \C$ be a non-principal Dirichlet character of period $q$.  Then $\chi$ is completely multiplicative (thus $\chi(nm) = \chi(n) \chi(m)$ for any $n,m \in \N$) and has mean zero on any interval of length $q$.  Thus for any homogeneous arithmetic progression $\{d,2d,\dots,nd\}$ one has
$$ \left|\sum_{j=1}^n \chi(jd)\right| = \left|\chi(d) \sum_{j=1}^n \chi(j)\right| \leq q $$
and so the discrepancy of $\chi$ is at most $q$ (indeed one can refine this bound further using character sum bounds such as the Burgess bound \cite{burgess}).  Of course, this does not contradict Theorem \ref{hilbert} or Corollary \ref{edp}, even when the character $\chi$ is real, because $\chi(n)$ vanishes when $n$ shares a common factor with $q$.  Nevertheless, this demonstrates the need to exploit the hypothesis that $f$ has magnitude $1$ for \emph{all} $n$ (as opposed to merely for \emph{most} $n$).
\end{example}

\begin{example}[Borwein-Choi-Coons example]\label{bcc-ex}\cite{bcc}  Let $\chi_3$ be the non-principal Dirichlet character of period $3$ (thus $\chi_3(n)$ equals $+1$ when $n=1 \ (3)$, $-1$ when $n = 2 \ (3)$, and $0$ when $n = 0 \ (3)$), and define the completely multiplicative function $\tilde \chi_3\colon \N \to \{-1,+1\}$ by setting $\tilde \chi_3(p) := \chi_3(p)$ when $p \neq 3$ and $\tilde \chi_3(3) = +1$.  This is about the simplest modification one can make to Example \ref{dir} to eliminate the zeroes. Now consider the sum
$$ \sum_{j=1}^n \tilde \chi_3(j)$$
with $n := 1 + 3 + 3^2 + \dots + 3^k$ for some large $k$.  Writing $j = 3^i m$ with $m$ coprime to $3$ and $i$ at most $k$, we can write this sum as
$$ \sum_{i=0}^k \sum_{1 \leq m \leq n/3^j: (m,3)=1} \tilde \chi_3(3^i m).$$
Now observe that $\tilde \chi_3(3^i m) = \tilde \chi_3(3)^i \tilde \chi_3(m) = \chi_3(m)$.  The function $\chi_3$ has mean zero on every interval of length three, and $\lfloor n/3^j\rfloor$ is equal to $1$ mod $3$, hence
$$ \sum_{1 \leq m \leq n/3^j: (m,3)=1} \tilde \chi_3(3^i m) = 1$$
for every $i=0,\dots,k$.  Summing in $i$, we conclude that
$$ \sum_{j=1}^n \tilde \chi_3(j) = k+1 \gg \log n.$$
More generally, for natural numbers $n$, $\sum_{j=1}^n \tilde \chi_3(j)$ is equal to the number of $1$s in the base $3$ expansion of $n$.
Thus $\tilde \chi_3$ has infinite discrepancy, but the divergence is only logarithmic in the $n$ parameter; indeed from the above calculations and the complete multiplicativity of $\tilde \chi_3$ we see that $\sup_{n \leq N; d \in \N} | \sum_{j=1}^n \tilde \chi_3(jd)|$ is comparable to $\log N$ for $N>1$.  This can be compared with random sequences $f\colon \N \to \{-1,+1\}$, whose discrepancy would be expected to diverge like $N^{1/2+o(1)}$.  See the paper of Borwein, Choi, and Coons \cite{bcc} for further analysis of functions such as $\tilde \chi_3$, which seems to have been first discussed in \cite{schur}; see also \cite{but} for some further discussion of the sign patterns in $\tilde \chi_3$.  One can also reduce the discrepancy of this example slightly (by a factor of about two) by changing the value of the completely multiplicative function $\tilde \chi_3$ at $3$ from $+1$ to $-1$.

If one lets $\chi$ be a primitive Dirichlet character whose period is a prime $p = 1\ (4)$, and defines $\tilde \chi$ similarly to $\tilde \chi_3$ above, then the partial sums $\sum_{j=1}^n \tilde \chi(j)$ remain unbounded, but the Ces\`aro sum $\sum_{j=1}^n (1-\frac{j}{n}) \tilde \chi(j)$ remains bounded!  This observation\footnote{Bill Duke, personal communication} can be derived from a routine application of Perron's formula, together with the functional equation for $L(s,\chi)$, which in the $p=1\ (4)$ case establishes a zero of $L(s,\chi)$ at $s=0$.  Thus it is possible for the ``Ces\`aro smoothed discrepancy'' of a $\{-1,+1\}$-valued sequence to be finite.
\end{example}

\begin{example}[Vector-valued Borwein-Choi-Coons example]\label{vbc}  Let $H$ be a real Hilbert space with orthonormal basis $e_0,e_1,e_2,\dots$, let $\chi_3$ be the character from Example \ref{bcc-ex}, and let $f\colon \N \to H$ be the function defined by setting $f(3^a m) := \chi_3(m) e_a$ whenever $a=0,1,2\dots$ and $m$ is coprime to $3$.  Thus $f$ takes values in the unit sphere of $H$.   Repeating the calculation in Example \ref{bcc-ex}, we see that if $n = 1 + 3 + 3^2 + \dots + 3^k$, then
$$ \sum_{j=1}^n f(j) = e_0 + \dots + e_k$$
and hence
$$ \left\| \sum_{j=1}^n f(j) \right\|_H = \sqrt{k+1} \gg \sqrt{\log n}.$$
Conversely, if $n$ is a natural number and $d = 3^l d'$ for some $l=0,1,\dots$ and $d'$ coprime to $3$, we see from Pythagoras's theorem that
\begin{align*}
 \left\| \sum_{j=1}^n f(jd) \right\|_H &= \left\| \sum_{i \geq 0: 3^i \leq n} e_{i+l} \sum_{m \leq n/3^i} \chi_3(md') \right\|_H \\
&\leq \left(\sum_{i \geq 0: 3^i \leq n} 1\right)^{1/2}\\
&\ll \sqrt{\log n}.
\end{align*}
Thus the discrepancy of this function is infinite and diverges like $\sqrt{\log N}$.
\end{example}

\begin{example}[Random Borwein-Choi-Coons example]\label{rbc}  Let $\g\colon \N \to \{-1,+1\}$ be the stochastic (i.e. random) multiplicative function defined by setting $\g(n) = \chi_3(n)$ is coprime to $3$, and $\g(3^j) := \boldsymbol{\eps}_j$ for $j=1,2,3,\dots$, where $\chi_3$ is as in Example \ref{bcc-ex} and $\boldsymbol{\eps}_1, \boldsymbol{\eps}_2, \dots \in \{-1,+1\}$ are independently identically distributed signs, attaining $-1$ and $+1$ with equal probability.  Arguing similarly to \ref{vbc}, we have
\begin{align*}
\left(\E \left|\sum_{j=1}^n \g(jd)\right|^2\right)^{1/2} &= \left(\E \left| \sum_{i \geq 0: 3^i \leq n} \boldsymbol{\eps}_{i+l} \sum_{m \leq n/3^i} \chi_3(md') \right|^2\right)^{1/2} \\
&\leq \left(\sum_{i \geq 0: 3^i \leq n} 1\right)^{1/2}\\
&\ll \sqrt{\log n},
\end{align*}
where to reach the second line we use the additivity of variance for independent random variables.
Thus $\g$ in some sense has discrepancy growth like $\sqrt{\log N}$ ``on the average''.  (Note that one can interpret this example as a special case of Example \ref{vbc}, by setting $H$ to be the Hilbert space of real-valued square-integrable random variables.)  However, by carefully choosing the base $3$ expansion of $n$ depending on the signs $\boldsymbol{\eps}_1,\dots,\boldsymbol{\eps}_k$ (similarly to Example \ref{bcc-ex}) one can show that
$$ \sup_{n <3^{k+1}} \left|\sum_{j=1}^n \g(j)\right| \geq \frac{k+1}{2}$$
and so the actual discrepancy grows like $\log N$.  So this random example actually has essentially the same discrepancy growth as Example \ref{bcc-ex}.  We do not know if scalar sequences of significantly slower discrepancy growth than this can be constructed.
\end{example}

\begin{example}[Numerical examples] In \cite{kl} a sequence $f(n)$ supported on $n \leq N := \num{1160}$, with values in $\{-1,+1\}$ in that range, was constructed with discrepancy $2$ (and a SAT solver was used to show that $1160$ was the largest possible value of $N$ with this property).  A similar sequence with $N = \num{130000}$ of discrepancy $3$ was also constructed in that paper, as well as a sequence with $N = \num{127645}$ of discrepancy $3$ that was the restriction to $\{1,\dots,N\}$ of a completely multiplicative sequence taking values in $\{-1,+1\}$ (with the latter value of $\num{127645}$ being the best possible value of $N$; see also \cite{bgs} for a separate computation confirming this threshold).  This slow growth in discrepancy may be compared with the $\sqrt{\log N}$ type divergence in Example \ref{vbc}.
\end{example}

The above examples suggest that completely multiplicative functions are an important test case for Theorem \ref{hilbert} and Corollary \ref{edp}; the importance of this case was already isolated in \cite{erdos}.  More recently, the {\tt Polymath5} project \cite{polymath} obtained a number of equivalent formulations of the Erd\H{o}s discrepancy problem and its variants, including the logical equivalence of Theorem \ref{hilbert} with the following assertion involving such functions.  We define a \emph{stochastic} element of a measurable space $X$ to be a random variable $\g$ taking values in $X$, or equivalently a measurable map $g\colon \Omega \to X$ from an ambient probability space $(\Omega,\mu)$ (known as the \emph{sample space}) to $X$.

\begin{theorem}[Equivalent form of vector-valued Erd\H{o}s discrepancy problem]\label{vec}  Let $\g\colon \N \to S^1$ be a \emph{stochastic} completely multiplicative function taking values in the unit circle $S^1 := \{z \in \C: |z|=1\}$ (where we give the space $(S^1)^\N$ of functions from $\N$ to $S^1$ the product $\sigma$-algebra).  Then
$$ \sup_n \E \left|\sum_{j=1}^n \g(j)\right|^2 = +\infty.$$
\end{theorem}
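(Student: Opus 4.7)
The plan is to argue by contradiction: assume that $\sup_n \E\left|\sum_{j=1}^n \g(j)\right|^2 \leq C$ for some finite $C$, and derive a contradiction using the two analytic ingredients advertised in the abstract. The $L^2$ bound should be read as saying that, at scale $n$, the partial sums of $\g$ exhibit heavy cancellation on average, and we will show that such cancellation is incompatible with the multiplicative structure of $\g$.

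First, I would feed the hypothesis into the logarithmically averaged Elliott conjecture. A uniform bound on $\E\left|\sum_{j=1}^n \g(j)\right|^2$ translates, after a dyadic/logarithmic decomposition, into control on sums $\E\left|\sum_{n \leq N} \g(n+h_1)\overline{\g(n+h_2)}\right|$ for small shifts $h_1,h_2$, which is exactly the shape of object the logarithmic Elliott conjecture governs. The conclusion I would aim for is a Halász-type structural dichotomy: either the correlations decay, which would already contradict the $L^2$ lower bound coming from the diagonal term (since $\|\g(n)\|=1$ forces the $\ell^2$ mass to grow linearly), or $\g$ must \emph{pretend}, with positive probability, to be a modulated Dirichlet character $n \mapsto \chi(n) n^{it}$ for some $\chi$ of some conductor $q$ and some real $t$, in the sense that the pretentious distance $\sum_p p^{-1}(1-\operatorname{Re}(\g(p)\overline{\chi(p)} p^{-it}))$ is finite (with positive probability over the sample space).

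Second, conditioning on the event that $\g$ pretends to $\chi n^{it}$ for some fixed $\chi, t$ (legitimate up to a measurable selection and a loss of a positive probability factor), I would absorb the modulation $n^{it}$ by summation-by-parts, which only costs a bounded factor in partial sums up to any fixed scale, and then exploit the freedom of $\g$ at primes $p$ dividing the conductor $q$. This is where the third ingredient enters: an extension of the Polymath5 argument driving Examples \ref{bcc-ex} and \ref{rbc}. The point is that for each such ramified prime $p$, the values $\g(p^k)$ are essentially unconstrained $S^1$-valued random variables, so by choosing $n = 1 + p + p^2 + \cdots + p^k$ and an appropriate dilation $d$ (measurably, depending on $\g$), the sum $\sum_{j=1}^n \g(jd)$ telescopes into $\sum_{i=0}^k \g(p^{i+l}) S_i$, where each $S_i = \sum_{m \leq n/p^i,\,(m,p)=1}\chi(md')$ is essentially $1$ thanks to the mean-zero property of $\chi$ on length-$q$ intervals and the choice of $n$. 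Summing in $i$ produces a norm of order $\sqrt{k+1}$ (or even $k+1$ after a careful adaptive choice of $n$ depending on the phases of $\g(p^i)$), in direct violation of the bound $C$.

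The main obstacle will be Step 1: extracting pretentious structure from the hypothesis is exactly the content of the recent work on the logarithmic Elliott conjecture, and pushing it into the stochastic setting requires being careful about which almost-sure statements survive the averaging over the sample space. A secondary, but nontrivial, difficulty lies in Step 2, namely making the pretentious approximation uniform enough over primes dividing $q$ that the Borwein--Choi--Coons telescoping goes through with controllable error; the twist by $n^{it}$ in particular demands an additional ingredient to ensure that the sums $S_i$ above really are bounded away from zero rather than oscillating, which is where an extension (as opposed to a mere quotation) of the Polymath5 computation becomes necessary.
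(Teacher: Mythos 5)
Your overall skeleton matches the paper's: assume boundedness, show via the logarithmically averaged Elliott theorem that with high probability $\g$ pretends to be $\chi(n)n^{it}$, and then run a Borwein--Choi--Coons style computation to contradict this. Your Step~1 is essentially the paper's Proposition~\ref{star}: the diagonal of $\E\bigl|\sum_{h\le H}\g(n+h)\bigr|^2$ is $\gg H\log X$ after logarithmic averaging, so if the off-diagonals were all small this would contradict the hypothesis, hence some pair correlation is large, hence $\g$ is pretentious. That part is fine in outline.

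The gaps are in Step~2, and they are not cosmetic.
\begin{itemize}
\item You assert that the $n^{it}$ twist can be absorbed by summation by parts at ``a bounded factor in partial sums up to any fixed scale.'' This is false without first bounding $t$: Proposition~\ref{star} only gives $t = O_{C,\eps}(X)$, and for such $t$ the phase $n^{it}$ makes a full rotation over intervals of length $\ll n/|t|$, which can be $O(1)$. Summation by parts would introduce an error of size $\gg |t|\log X$, swamping everything. The paper handles this with a separate lemma (Lemma~\ref{tb}), which uses the Vinogradov--Korobov zero-free region to show $t = O_\eps(X^\delta)$, and then removes $n^{it}$ by \emph{local Taylor expansion over short windows} $[n,n+H]$ (where it is nearly constant once $n\ge X^{2\delta}$), rather than by summation by parts over $[1,n]$. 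Your proposal has no substitute for Lemma~\ref{tb}, and without it this step breaks.
\item Your BCC telescoping $\sum_{j\le n}\g(jd) = \sum_i \g(p^{i+l})S_i$ with $S_i$ a character sum requires that $\g$ agree \emph{exactly} with $\chi$ on integers coprime to $q$. But the Elliott-type conclusion only gives a \emph{pretentious} approximation, i.e.\ $\g = \tilde{\boldsymbol\chi}\cdot n^{i\t}\cdot\h$ where $\h$ has small pretentious distance to $1$ but is far from identically $1$. The telescoping into clean character sums simply does not happen; the core difficulty of Section~\ref{bcc-sec} is precisely to decouple the periodic factor $\tilde{\boldsymbol\chi}$ from the slowly varying $\h$. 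The paper does this by restricting $n$ to ``good'' residue classes mod $\q^k$ (so that $\tilde{\boldsymbol\chi}(n+m)$ depends only on $n\bmod\q^k$), and then showing via Euler-product estimates that the weighted sums $\sum_{n\equiv b\,(\q^k)}\h(n)n^{-1-1/\log X}$ are nearly equidistributed in $b$, with error $\exp(O_\eps((\log\log X)^{1/2}))$ against a main term of size $\log X$. You flag this as a ``nontrivial difficulty'' and say you would make the approximation ``uniform enough . . . with controllable error,'' but that is not a method; this decoupling + equidistribution argument is the bulk of the proof and is absent in your write-up.
\item The adaptive choice of $n$ ``depending on the phases of $\g(p^i)$'' is not directly usable in the quantity $\sup_n\E|\cdot|^2$: $n$ must be chosen deterministically before the expectation is taken. (The $L^\infty$ adaptive trick is what gives the $\log N$ lower bound in Example~\ref{rbc}, not the $L^2$ one.) The paper's actual endgame is an $L^2$ almost-orthogonality argument in the spirit of Example~\ref{vbc}: after the decoupling it arrives at the deterministic bound \eqref{contra}, Fourier-expands the primitive character $\boldsymbol\chi$ to obtain exact cancellation between distinct scales $d_1\neq d_2$ dividing $\q^{k-1}$, and extracts a lower bound $\gg$ (number of scales $\q^i<\sqrt H$), contradicting \eqref{contra} for $H$ large. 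This Plancherel-type structure is what replaces your proposed telescoping.
\end{itemize}
In short: you have the right high-level decomposition and correctly locate the hard parts, but the two technical ideas that actually carry the proof --- the zero-free-region bound on $\t$, and the residue-class decoupling with Euler-product equidistribution of the non-pretentious factor $\h$ --- are missing, and the substitutes you sketch (summation by parts, clean telescoping, adaptive $n$) do not work as stated.
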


By converting all the probabilistic language to measure-theoretic language, Theorem \ref{vec} has the following equivalent form:

\begin{theorem}[Measure-theoretic formulation]\label{mes}  Let $(\Omega,\mu)$ be a probability space, and let $g: \Omega \to (S^1)^\N$ be a measurable function to the space $(S^1)^\N$ of functions from $\N$ to $S^1$, such that $g(\omega) \in (S^1)^\N$ is completely multiplicative for $\mu$-almost every $\omega \in \Omega$ (that is to say, $g(\omega)(nm) = g(\omega)(n) g(\omega)(m)$ for all $n,m \in \N$ and $\mu$-almost all $\omega \in \Omega$).  Then one has
$$ \sup_n \int_\Omega \left|\sum_{j=1}^n g(\omega)(j)\right|^2\ d\mu(\omega) = +\infty.$$
\end{theorem}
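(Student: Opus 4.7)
The plan is to recognise that Theorem \ref{mes} and Theorem \ref{vec} are two languages for the same statement, so the proof is purely a translation exercise: I would exhibit a bijection between the hypotheses of the two theorems and verify that the conclusions match term-for-term, thereby deducing Theorem \ref{mes} directly from Theorem \ref{vec}.

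First I would recall the definition of \emph{stochastic} given just before Theorem \ref{vec}: a stochastic element $\g$ of a measurable space $X$ is by definition a measurable map $g\colon\Omega\to X$ from some probability space $(\Omega,\mu)$. Specialising to $X=(S^1)^{\N}$ with its product $\sigma$-algebra, the data of a stochastic function $\g\colon\N\to S^1$ is exactly the data of a measurable map $g\colon\Omega\to(S^1)^{\N}$, with the coordinate random variables $\g(n)$ corresponding to the measurable maps $\omega\mapsto g(\omega)(n)$. I would then check that ``complete multiplicativity'' translates correctly: for each fixed pair $n,m\in\N$, the set $\{\omega\in\Omega : g(\omega)(nm)=g(\omega)(n)g(\omega)(m)\}$ is measurable, since the coordinate projections generate the product $\sigma$-algebra and multiplication on $S^1$ is continuous; taking a countable intersection over $(n,m)\in\N^2$ shows that the set of $\omega$ on which $g(\omega)$ is completely multiplicative is also measurable, and the hypothesis of Theorem \ref{mes} is precisely that this set has full measure, matching the almost-sure multiplicativity of $\g$ in Theorem \ref{vec}.

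Next I would translate the conclusion. For each fixed $n\in\N$, the function $\omega\mapsto\bigl|\sum_{j=1}^{n}g(\omega)(j)\bigr|^{2}$ is bounded (by $n^{2}$) and measurable, being a polynomial in the measurable coordinate functions $g(\omega)(1),\dots,g(\omega)(n)$ and their complex conjugates. By the definition of expectation on the sample space $(\Omega,\mu)$, one has the identity $\E\bigl|\sum_{j=1}^{n}\g(j)\bigr|^{2}=\int_{\Omega}\bigl|\sum_{j=1}^{n}g(\omega)(j)\bigr|^{2}\,d\mu(\omega)$, and taking suprema over $n$ shows that the quantity in the conclusion of Theorem \ref{mes} is identically equal to the quantity in the conclusion of Theorem \ref{vec}.

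There is no serious obstacle; the only mild subtlety is that Theorem \ref{vec} is stated for $\g$ which is ``stochastic completely multiplicative'' (i.e.\ almost surely multiplicative), whereas Theorem \ref{mes} explicitly allows $g(\omega)$ to fail multiplicativity on a $\mu$-null set. This is harmless, since one may redefine $g(\omega)$ to be the constant function $1\in(S^{1})^{\N}$ on the null exceptional set without altering any of the integrals $\int_{\Omega}\bigl|\sum_{j=1}^{n}g(\omega)(j)\bigr|^{2}\,d\mu(\omega)$. Combining the above, Theorem \ref{mes} is logically equivalent to Theorem \ref{vec}, and in particular is an immediate consequence of it.
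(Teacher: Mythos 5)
Your proposal is correct and matches the paper's treatment: the paper obtains Theorem \ref{mes} from Theorem \ref{vec} precisely "by converting all the probabilistic language to measure-theoretic language," i.e.\ as the definitional unwinding you carry out (with the main content of the paper then devoted to proving Theorem \ref{vec} itself). Your extra care with the measurability of the multiplicativity event and the null-set modification fills in details the paper leaves implicit, but the route is the same.
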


The equivalence between Theorem \ref{hilbert} and Theorem \ref{vec} (or Theorem \ref{mes}) was obtained in \cite{polymath} using a Fourier-analytic argument; for the convenience of the reader, we reproduce this argument in Section \ref{fourier-sec}.  The close similarity between Example \ref{vbc} and Example \ref{rbc} can be interpreted as a special case of this equivalence.

It thus remains to establish Theorem \ref{vec}.  To do this, we use a recent result of the author \cite{tao-elliott} regarding correlations of multiplicative functions:

\begin{theorem}[Logarithmically averaged nonasymptotic Elliott conjecture]\label{elliott}\cite[Theorem 1.3]{tao-elliott}   Let $a_1,a_2$ be natural numbers, and let $b_1,b_2$ be integers such that $a_1 b_2 - a_2 b_1 \neq 0$.   Let $\eps > 0$, and suppose that $A$ is sufficiently large depending on $\eps,a_1,a_2,b_1,b_2$.  Let $x \geq w \geq A$, and let $g_1,g_2\colon \N \to \C$ be multiplicative functions with $|g_1(n)|, |g_2(n)| \leq 1$ for all $n$, with $g_1$ ``non-pretentious'' in the sense that
\begin{equation}\label{tax0}
 \sum_{p \leq x} \frac{1 - \operatorname{Re} g_1(p) \overline{\chi(p)} p^{-it}}{p} \geq A 
\end{equation}
for all Dirichlet characters $\chi$ of period at most $A$, and all real numbers $t$ with $|t| \leq Ax$.  Then
\begin{equation}\label{mang} \left|\sum_{x/w < n \leq x} \frac{g_1(a_1 n + b_1) g_2(a_2 n + b_2)}{n}\right| \leq \eps \log \omega.
\end{equation}
\end{theorem}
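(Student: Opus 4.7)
The plan is to argue by contradiction, following the strategy of \cite{tao-elliott}. Suppose the correlation sum on the left of \eqref{mang} exceeds $\eps \log \omega$ for some multiplicative $g_1, g_2$ with $g_1$ satisfying \eqref{tax0}.

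The first ingredient is a multiplicativity manipulation. For any prime $p$, the substitution $n \mapsto pn$ gives
$$g_i(a_i p n + b_i p) = g_i(p) \, g_i(a_i n + b_i),$$
so the correlation along $pn$ is, up to an extra factor of $g_1(p) g_2(p)$, essentially the original correlation but with shifts $b_i$ replaced by $b_i p$. A Tur\'an--Kubilius-type comparison lets one restore the original shifts for typical $n$. Averaging over primes $p$ in a long dyadic window thus produces many nearly identical incarnations of the correlation sum, parameterized by $p$.

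The decisive ingredient is an entropy decrement argument. View the pair $(g_1(a_1 n + b_1), g_2(a_2 n + b_2))$ for $n$ in a short window as a vector-valued random variable, and track the (suitably discretized) Shannon entropy of its joint distribution as a function of scale. The manipulation above forces the distribution to be approximately invariant under translation by small primes $p$ at every scale. Since the entropy is bounded and the sum of the decrements across many scales is therefore bounded, there must exist a scale at which the approximate invariance is quantitatively strong. Combining this structural information with the Matom\"aki--Radzi\l{}\l{} theorem on short-interval sums of multiplicative functions then forces $g_1$ to pretend, on most short intervals, to be a modulated Dirichlet character $\chi(n) n^{it}$ of bounded conductor and with $|t| \ll x$, contradicting \eqref{tax0} once $A$ is taken sufficiently large in terms of $\eps, a_1, a_2, b_1, b_2$.

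The main obstacle is making the entropy decrement step quantitative rather than merely qualitative: one must show that an $\eps$-sized correlation forces $\eps^{O(1)}$-strength invariance at some accessible scale, and that this in turn triggers pretention at a level absorbable into the threshold in \eqref{tax0}. Ensuring that $A$ depends only on $\eps, a_1, a_2, b_1, b_2$ --- and not on $x$ or $\omega$ --- is the delicate quantitative heart of the proof.
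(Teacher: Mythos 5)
This theorem is not proved in the present paper at all: it is imported verbatim from \cite[Theorem 1.3]{tao-elliott}, so the ``paper's own proof'' here is simply the citation. Your outline does accurately describe the strategy of that reference --- the dilation $n \mapsto pn$ exploiting multiplicativity to trade the shifts $b_i$ for $b_i p$, the averaging over primes $p$ in a long range with a Tur\'an--Kubilius comparison, the entropy decrement argument to locate a scale with strong approximate invariance, and the Matom\"aki--Radziwi{\l}{\l} input to force pretentiousness contradicting \eqref{tax0} --- so in that sense you have the right approach. However, what you have written is a road map, not a proof: the entropy decrement step, the discretization of the joint distribution, and the passage from approximate dilation-invariance to a modulated Dirichlet character are all asserted rather than executed, and you yourself flag the quantitative heart as an unresolved ``obstacle.'' For the purposes of this paper nothing more than the citation is needed; for a self-contained proof you would have to supply the full argument of \cite{tao-elliott}, which occupies a separate paper.
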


This theorem is a variant of the Elliott conjecture \cite{elliott} (as corrected in \cite{mrt}), which in turn is a generalisation of a well known conjecture of Chowla \cite{chowla}.  See \cite{tao-elliott} for further discussion of this result, the proof of which relies on a number of tools, including the recent results in \cite{mr}, \cite{mrt} on mean values of multiplicative functions in short intervals.  It can be viewed as a sort of ``inverse theorem'' for pair correlations of multiplicative functions, asserting that such correlations can only be large when both of the multiplicative functions ``pretend'' to be like modulated Dirichlet characters $n \mapsto \chi(n) n^{it}$.

Using this result and a standard van der Corput argument, one can show that the only potential counterexamples to Theorem \ref{vec} come from (stochastic) completely multiplicative functions that usually ``pretend'' to be like modulated Dirichlet characters (cf. Examples \ref{dir}, \ref{bcc-ex}, \ref{rbc}).  More precisely, we have

\begin{proposition}[van der Corput argument]\label{star}  Suppose that $\g\colon \N \to S^1$ is a stochastic completely multiplicative function, such that
\begin{equation}\label{nax}
\E \left|\sum_{j=1}^n \g(j)\right|^2 \leq C^2
\end{equation}
for some finite $C>0$ and all natural numbers $n$ (thus, $\g$ would be counterexample to Theorem \ref{vec}).  Let $\eps > 0$, and suppose that $X$ is sufficiently large depending on $\eps,C$.  Then with probability $1-O(\eps)$, one can find a (stochastic) Dirichlet character $\boldsymbol{\chi}$ of period $\mathbf{q} = O_{C,\eps}(1)$ and a (stochastic) real number $\t = O_{C,\eps}(X)$ such that
\begin{equation}\label{gh0}
 \sum_{p \leq X} \frac{1 -\operatorname{Re} \g(p) \overline{\boldsymbol{\chi}(p)} p^{-i\t}}{p} \ll_{C,\eps} 1.
\end{equation}
\end{proposition}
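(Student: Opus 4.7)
The plan is to argue by contrapositive using Theorem \ref{elliott}: assuming the pretentiousness conclusion \eqref{gh0} fails for $\g$, Theorem \ref{elliott} will force certain short-shift logarithmic pair correlations of $\g$ to be small, and I will derive a contradiction with \eqref{nax} via a van der Corput expansion on a short window. The preliminary step is to replace the $L^2$ assumption \eqref{nax} by a deterministic pointwise bound: by Markov's inequality applied to \eqref{nax} (together with a union bound over $n \leq X$), I may pass to an event of probability $1 - O(\eps)$ on which $|S_n|^2 \leq C^2/\eps$ for all $n \leq X$, where $S_n := \sum_{j \leq n} \g(j)$. The subsequent analysis proceeds pointwise on this good event.

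Since $|S_{N+H} - S_N| \leq 2C/\sqrt{\eps}$ on the good event, expanding the square and averaging over $N$ in a dyadic range $[N_0, 2 N_0]$ with $N_0 \asymp X$ yields
$$H + 2 \operatorname{Re} \sum_{k=1}^{H-1} (H-k) c_k \leq 4C^2/\eps, \qquad c_k := \frac{1}{N_0} \sum_{N_0 \leq N < 2N_0} \g(N+k) \overline{\g(N)}.$$
Choosing $H$ just above the threshold $4C^2/\eps$ (say $H = 8C^2/\eps$) forces $\sum_{k=1}^{H-1} (H-k)|c_k| \gtrsim_{C, \eps} 1$, and a pigeonhole extracts some $k_0 \in [1, H-1]$ with $|c_{k_0}| \gtrsim_{C, \eps} 1$. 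Crucially, $k_0 = O_{C, \eps}(1)$.

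A partial summation converts this linear-average lower bound into a logarithmic-average lower bound $|\sum_{N_0 < n \leq 2 N_0} \g(n + k_0) \overline{\g(n)}/n| \gtrsim_{C, \eps} 1$. I now apply Theorem \ref{elliott} contrapositively with $a_1 = a_2 = 1$, $b_1 = 0$, $b_2 = k_0$ (so that $a_1 b_2 - a_2 b_1 = k_0 \neq 0$), $g_1 = \g$, $g_2 = \overline{\g}$, and with Elliott's $\eps$ chosen smaller than the above lower bound: the non-pretentiousness condition \eqref{tax0} must fail at a scale $A = A(C, \eps)$ depending only on $C, \eps$. This produces the required stochastic Dirichlet character $\boldsymbol{\chi}$ of period $\leq A = O_{C, \eps}(1)$ and stochastic real $\t$ with $|\t| \leq A X = O_{C, \eps}(X)$ satisfying \eqref{gh0}.

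The main obstacle is the second paragraph: producing a shift $k_0$ that is genuinely \emph{bounded} in terms of $C, \eps$ alone, rather than growing with $X$. Choosing the window length $H$ just above the threshold $4C^2/\eps$ is essential here, since Theorem \ref{elliott}'s admissibility threshold $A$ depends on the parameter $b_2 = k_0$; a shift growing with $X$ would yield a character $\boldsymbol{\chi}$ of unbounded period, rendering the conclusion vacuous.
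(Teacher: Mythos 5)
Your overall strategy is the paper's: a van der Corput expansion of short window sums, a pigeonhole to extract a single shift $k_0=O_{C,\eps}(1)$ with a large logarithmically averaged correlation, and a contrapositive application of Theorem \ref{elliott}. You have also correctly identified the crucial point that the shift must be bounded in terms of $C,\eps$ only. However, there are two genuine gaps. First, your preliminary reduction is wrong: Markov's inequality gives $\P(|S_n|^2>C^2/\eps)\leq\eps$ for each \emph{individual} $n$, so a union bound over $n\leq X$ yields an exceptional probability $O(\eps X)$, not $O(\eps)$; there is no event of probability $1-O(\eps)$ on which $|S_n|\leq C/\sqrt{\eps}$ for all $n\leq X$ obtainable this way. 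The paper avoids this by never passing to a pointwise bound on $S_n$: it bounds $\E|S_{n+H}-S_n|^2\leq 4C^2$ directly by the triangle inequality in $L^2$, sums with weight $1/n$ over $\sqrt{X}\leq n\leq X$ to get an expected total $\ll_C \log X$, and applies Markov \emph{once} to that single aggregated random variable.

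Second, your application of Theorem \ref{elliott} to a dyadic block violates its hypotheses: the theorem requires $x\geq w\geq A$ with $A$ large depending on the Elliott parameter $\eps'$ and on $b_2=k_0$, so you cannot take $w=2$. You need the lower bound on the logarithmic correlation sum over a long range such as $\sqrt{X}<n\leq X$ (i.e.\ $w=\sqrt{X}\geq A$), and it must exceed $\eps'\log w \asymp \eps'\log X$, not merely be $\gg_{C,\eps}1$. This is why the paper performs the diagonal-versus-off-diagonal comparison and the pigeonhole over pairs $(\mathbf{h}_1,\mathbf{h}_2)\in[1,H]^2$ for the entire logarithmically averaged sum over $[\sqrt{X},X]$ at once, obtaining a single (stochastic) pair of shifts whose correlation sum is $\gg_{C,\eps,H}\log X$; aggregating your dyadic blocks instead would let the pigeonholed shift vary from block to block. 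Both gaps are repairable, but the repair essentially amounts to rearranging your argument into the paper's: keep all averaging inside the expectation and over the full range $[\sqrt{X},X]$, and apply Markov exactly once at the end.
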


(See Section \ref{notation-sec} below for our asymptotic notation conventions.)  We give the (easy) derivation of Proposition \ref{star} from Theorem \ref{elliott} in Section \ref{elliott-sec}.  One can of course reformulate Proposition \ref{star} in measure-theoretic language if desired, much as Theorem \ref{vec} may be reformulated as Theorem \ref{mes}; we leave this to the interested reader.  Of course, Theorem \ref{vec} implies that the hypotheses of Proposition \ref{star} cannot hold, and so Proposition \ref{star} is in fact vacuously true; nevertheless it is necessary to establish this proposition independently of Theorem \ref{vec} to avoid circularity.

It remains to demonstrate Theorem \ref{vec} for random completely multiplicative functions $\g$ that obey \eqref{gh0} with high probability for large $X$ and small $\eps$.  Such functions $\g$ can be viewed as (somewhat complicated) generalisations of the Borwein-Choi-Coons example (Example \ref{bcc-ex}), and it turns out that a more complicated version of the analysis in Example \ref{bcc-ex} (or Example \ref{vbc}) suffices to establish a lower bound for $\E |\sum_{j=1}^n \g(j)|^2$ (of logarithmic type, similar to that in Example \ref{vbc}) which is enough to conclude Theorem \ref{vec} and hence Theorem \ref{hilbert} and Corollary \ref{edp}.  We give this argument in Section \ref{bcc-sec}.

In principle, the arguments in \cite{tao-elliott} provide an effective value for $A$ as a function of $\eps,a_1,a_2,b_1,b_2$ in Theorem \ref{elliott}, which would in turn give an explicit lower bound for the divergence of the discrepancy in Theorem \ref{hilbert} or Corollary \ref{edp}.  However, this bound is likely to be far too weak to match the $\sqrt{\log N}$ type divergence observed in Example \ref{vbc}.  Nevertheless, it seems reasonable to conjecture that the $\sqrt{\log N}$ order of divergence is best possible for Theorem \ref{hilbert} (although it is unclear to the author whether such a slowly diverging example can also be attained for Corollary \ref{edp}).

The arguments in this paper can also be used to partially classify the multiplicative (but not completely multiplicative) functions taking values in $\{-1,+1\}$ that have bounded partial sums; see Section \ref{sumsec}. 

\begin{remark}  In \cite{polymath, gowers}, Theorem \ref{hilbert} was also shown to be equivalent to the existence of sequences $(c_{m,d})_{m,d \in \N}$, $(b_n)_{n \in \N}$ of non-negative reals such that $\sum_{m,d} c_{m,d}=1$, $\sum_n b_n = \infty$, and such that the real quadratic form
$$ \sum_{m,d} c_{m,d}(x_d+x_{2d}+\dots+x_{md})^2 - \sum_n b_n x_n^2$$
is positive semi-definite.  The arguments of this paper thus abstractly show that such sequences exist, but do not appear to give any explicit construction for such a sequence.
\end{remark}

\begin{remark}  In \cite[Conjecture 3.12]{gowers}, the following stronger version of Theorem \ref{hilbert} was proposed: if $C \geq 0$ and $N$ is sufficiently large depending on $C$, then for any matrix $(a_{ij})_{1 \leq i,j \leq N}$ of reals with diagonal entries equal to $1$, there exist homogeneous arithmetic progressions $P = \{d,2d,\dots,nd\}$ and $Q = \{d',2d',\dots,n'd'\}$ in $\{1,\dots,N\}$ such that
$$ |\sum_{i \in P} \sum_{j \in Q} a_{ij}| \geq C.$$
Setting $a_{ij} := \langle f(i), f(j) \rangle_H$, we see that this would indeed imply Theorem \ref{hilbert} and thus Corollary \ref{edp}.  We do not know how to resolve this conjecture, although it appears that a two-dimensional variant of the Fourier-analytic arguments in Section \ref{fourier-sec} below can handle the special case when $a_{ij} = \pm 1$ for all $i,j$ (which would still imply Corollary \ref{edp} as a special case).  We leave this modification of the argument to the interested reader.
\end{remark}

\begin{remark}  As a further near-counterexample to Corollary \ref{edp}, we present here an example of a sequence $f: \N \to \{-1,+1\}$ for which
$$ \sup_{n \in \N} \left| \sum_{j=1}^n f(jd)\right| < \infty$$
for each $d$ (though of course the left-hand side must be unbounded in $d$, thanks to Corollary \ref{edp}).  We set $f(1) := 1$, and then recursively for $D = 1, 2, \dots$ we define
$$f(jD! + k) := (-1)^j f(k)$$
for $k=1,\dots,D!$ and $j=1,\dots,D$.  Thus for instance the first few elements of the sequence\footnote{OEIS A262725} are
$$1, -1, -1, 1, 1, -1, -1, 1, 1, -1, -1, 1, 1, -1, -1, 1, 1, -1, \dots.$$
If $d$ is a natural number and $D$ is any odd number larger than $d$, then we see that the block $f(d), f(2d), \dots, f( (D+1)! )$ is $D+1$ alternating copies of $f(d), f(2d), \dots, f(D!)$ and thus sums to zero; in fact if we divide $f(d), f(2d), \dots$ into consecutive blocks of length $(D+1)!/d$ then all such blocks sum to zero, and so $\sup_n |\sum_{j=1}^n f(jd)|$ is finite for all $d$.
\end{remark}

\begin{remark}  There is a curious superficial similarity between the arguments in this paper and the Hardy-Littlewood circle method.  In the latter, Fourier analytic arguments are used to reduce matters to estimates on ``major arcs'' and ``minor arcs''; in this paper, Fourier analytic arguments are used to reduce matters to estimates for ``pretentious multiplicative functions'' and ``non-pretentious multiplicative functions''.  We do not know if there is any deeper significance to this similarity.
\end{remark}

\subsection{Notation}\label{notation-sec}

We adopt the usual asymptotic notation of $X \ll Y$, $Y \gg X$, or $X = O(Y)$ to denote the assertion that $|X| \leq CY$ for some constant $C$.  If we need $C$ to depend on an additional parameter we will denote this by subscripts, e.g. $X = O_\eps(Y)$ denotes the bound $|X| \leq C_\eps Y$ for some $C_\eps$ depending on $\eps$.   
For any real number $\alpha$, we write $e(\alpha) := e^{2\pi i \alpha}$.

All sums and products will be over the natural numbers $\N =\{1,2,\dots\}$ unless otherwise specified, with the exception of sums and products over $p$ which is always understood to be prime.

We use $d|n$ to denote the assertion that $d$ divides $n$, and $n\ (d)$ to denote the residue class of $n$ modulo $d$.  We use $(a,b)$ to denote the greatest common divisor of $a$ and $b$.

We will frequently use probabilistic notation such as the expectation $\E \X$ of a random variable $\X$ or a probability $\P(E)$ of an event $E$.  We will use boldface symbols such as $\g$ to refer to random (i.e. stochastic) variables, to distinguish them from deterministic variables, which will be in non-boldface.

\section{Fourier analytic reduction}\label{fourier-sec}

In this section we establish the logical equivalence between Theorem \ref{hilbert} and Theorem \ref{vec} (or Theorem \ref{mes}).  The arguments here are taken from a website\footnote{{\tt michaelnielsen.org/polymath1/index.php?title=Fourier\_reduction}} of the {\tt Polymath5} project \cite{polymath}.

The deduction of Theorem \ref{mes} from Theorem \ref{hilbert} is straightforward: if $(\Omega,\mu)$ and $g$ are as in Theorem \ref{mes}, one takes $H$ to be the complex Hilbert space $L^2(\Omega,\mu)$, and for each natural number $n$, we let $f(n) \in H$ be the function
$$ f(n): \omega \mapsto g(\omega)(n).$$
Since $g(\omega)(n) \in S^1$, $f(n)$ is clearly a unit vector in $H$.  For any homogeneous arithmetic progression $\{d,2d,\dots,nd\}$, one has
\begin{align*}
\left\| \sum_{j =1}^n f(jd)\right\|_H^2 &= \int_\Omega \left|\sum_{j=1}^n g(\omega)(jd)\right|^2\ d\mu(\omega)\\
&= \int_\Omega \left|\sum_{j=1}^n g(\omega)(j)\right|^2\ d\mu(\omega)
\end{align*}
and on taking suprema in $n$ and $d$ we conclude that Theorem \ref{mes} follows from Theorem \ref{hilbert}.  (Note that this argument also explains the similarity between Example \ref{rbc} and Example \ref{vbc}.)

Since Theorem \ref{mes} is equivalent to Theorem \ref{vec}, it remains to show that Theorem \ref{vec} implies Theorem \ref{hilbert}.
We take contrapositives, thus we assume that Theorem \ref{hilbert} fails, and seek to conclude that Theorem \ref{vec} also fails.  By hypothesis, we can find a function $f\colon \N \to H$ taking values in the unit sphere of a Hilbert space $H$ and a finite quantity $C$ such that
\begin{equation}\label{jn}
 \left\|\sum_{j=1}^n f(jd)\right\|_H \leq C
\end{equation}
for all homogeneous arithmetic progressions $d,2d,\dots,nd$.  By complexifying $H$ if necessary, we may take $H$ to be a complex Hilbert space.  To obtain the required conclusion, it will suffice to construct a random completely multiplicative function $\g$ taking values in $S^1$, such that
$$ \E \left|\sum_{j=1}^n \g(j)\right|^2 \ll_C 1$$
for all $n$.

We claim that it suffices to construct, for each $X \geq 1$, a stochastic completely multiplicative function $\g_X$ taking values in $S^1$ such that
\begin{equation}\label{gca}
 \E \left|\sum_{j=1}^n \g_X(j)\right|^2 \ll_C 1
\end{equation}
for all $n \leq X$, where the implied constant is uniform in $n$ and $X$, but we allow the underlying probability space defining the stochastic function $\g_X$ to depend on $X$.  This reduction is obtained by a standard compactness argument\footnote{If one wished to obtain a more quantitative version of Theorem \ref{hilbert}, one would avoid this compactness argument and work instead with truncated versions of Theorem \ref{vec} (or Theorem \ref{mes}) in which one restricts the $n$ parameter to be less than some large cutoff.  This would then require similar truncations to be made in the arguments in later sections, which in particular requires some treatment of error terms created when truncating Euler products, but such errors can be made negligible by making the truncation parameter extremely large with respect to all other parameters.  We leave the details of this reformulation of the argument to the interested reader.}, but we give the details for sake of completeness.  Suppose that for each $X$, we have such a $\g_X$ obeying \eqref{gca} as above.  Let $\mathcal{M}$ be the space of completely multiplicative functions $g: \N \to S^1$ from $\N$ to $S^1$; one can view this space as isomorphic to an infinite product of $S^1$'s, since completely multiplicative functions are determined by their values at the primes.  In particular, $\mathcal{M}$ is a compact metrisable space; it can be viewed as a compact subspace of the space $(S^1)^\N$ of arbitrary functions (not necessarily multiplicative) from $\N$ to $S^1$.

Just as Theorem \ref{vec} is equivalent to Theorem \ref{mes}, we can view each $\g_X$ as a measurable map $f_X: \Omega_X \to \mathcal{M}$, such that
$$ \int_{\Omega_X} \left|\sum_{j=1}^n f_X(\omega)(j)\right|^2\ d\mu_X(\omega) \ll_C 1$$
for all $n \leq X$.  We can then define a Radon probability measure $\nu_X$ on $\mathcal{M}$ to be the probability distribution (or law) of the random variable $\g_X$, or equivalently the pushforward of the measure $\mu_X$ via $f_X$.  That is to say,
\begin{align*}
\int_{\mathcal{M}} F(g)\ d\nu_X(g) &= {\bf E} F( \g_X ) \\
&= \int_{\Omega_X} F(f_X(\omega))\ d\mu_X(\omega)
\end{align*}
for any continuous function $F: \mathcal{M} \to \C$.
The functions $g \mapsto \left|\sum_{j=1}^n g(j)\right|^2$ are continuous on $\mathcal{M}$, and hence
$$ \int_{\mathcal{M}} \left|\sum_{j=1}^n g(j)\right|^2\ d\nu_X(g) \ll_C 1$$
for all $n \leq X$.  By vague compactness of probability measures on compact metrisable spaces such as $\mathcal{M}$ (Prokhorov's theorem), we can thus extract a subsequence $\nu_{X_j}$ of the $\nu_X$ with $X_j \to \infty$ such that the $\nu_{X_j}$ converge to a Radon probability measure $\nu$ on $\mathcal{M}$, that is to say
$$ \int_{\mathcal{M}} F(g)\ d\nu_{X_j}(g) \to \int_{\mathcal{M}} F(g)\ d\nu_X(g) $$
as $j \to \infty$ for all continuous functions $F: \mathcal{M} \to \C$.  Applying this in particular to the continuous functions $g \mapsto \left|\sum_{j=1}^n g(j)\right|^2$, we conclude that
$$ \int_{\mathcal{M}} \left|\sum_{j=1}^n g(j)\right|^2\ d\nu(g) \ll_C 1$$
for \emph{all} $n$.  We then define the random completely multiplicative function $\g: \N \to S^1$ (or equivalently, a measurable map from a probability space to $\mathcal{M}$) by choosing $(\mathcal{M}, \nu)$ as the underlying probability space, and using the identity function $g \mapsto g$ as the measurable map.  We then have
$$ \E \left|\sum_{j=1}^n \g(j)\right|^2 \ll_C 1$$
for all $\n$, and the claim follows.

It remains to construct the random multiplicative functions $\g_X$ for each $X$.  Let $X \geq 1$, and let $p_1,\dots,p_r$ be the primes up to $X$.  Let $M \geq X$ be a natural number that we assume to be sufficiently large depending on $C,X$. Define a function $F \colon (\Z/M\Z)^r \to H$ by the formula
$$ F( a_1\ (M),\dots,a_r\ (M)) := f( p_1^{a_1} \dots p_r^{a_r} )$$
for $a_1,\dots,a_r \in \{0,\dots,M-1\}$, thus $F$ takes values in the unit sphere of $H$.  We also define the function $\pi \colon [1,X] \to (\Z/M\Z)^r$ by setting $\pi( p_1^{a_1} \dots p_r^{a_r} ) := (a_1,\dots,a_r)$ whenever $p_1^{a_1} \dots p_r^{a_r}$ is in the discrete interval
$$[1,X] := \{ n \in \N: 1 \leq n \leq X\};$$ 
note that $\pi$ is well defined for $M \geq X$.  Applying \eqref{jn} for $n \leq X$ and $d$ of the form $p_1^{a_1} \dots p_r^{a_r}$ with $1 \leq a_i \leq M - X$, we conclude that
$$ \left\|\sum_{j=1}^n F( x + \pi(j) )\right\|_H \ll_C 1$$
for all $n \leq X$ and all but $O_X( M^{r-1} )$ of the $M^r$ elements $x = (x_1,\dots,x_r)$ of $(\Z/M\Z)^r$.  For the exceptional elements, we have the trivial bound
$$ \left\|\sum_{j=1}^n F( x + \pi(j) )\right\|_H \leq n \leq X$$
from the triangle inequality.
Square-summing in $x$, we conclude (if $M$ is sufficiently large depending on $C,X$) that
\begin{equation}\label{fpi}
\frac{1}{M^r} \sum_{x \in (\Z/M\Z)^r} \left\|\sum_{j=1}^n F( x + \pi(j) )\right\|_H^2 \ll_C 1.
\end{equation}
By Fourier expansion, we can write
$$ F(x) = \sum_{\xi \in (\Z/M\Z)^r} \hat F(\xi) e\left( \frac{x \cdot \xi}{M} \right)$$
where $(x_1,\dots,x_r) \cdot (\xi_1,\dots,\xi_r) := x_1 \xi_1 + \dots + x_r \xi_r$, and the Fourier transform $\hat F \colon (\Z/M\Z)^r \to H$ is defined by the formula
$$ \hat F(\xi) := \frac{1}{M^r} \sum_{x \in (\Z/M\Z)^r} F(x) e\left( -\frac{x \cdot \xi}{M} \right).$$
A routine Fourier-analytic calculation (using the Plancherel identity) then allows us to write the left-hand side of \eqref{fpi} as
$$
\sum_{\xi \in (\Z/M\Z)^r} \|\hat F(\xi)\|_H^2 \left|\sum_{j=1}^n e\left( \frac{\pi(j) \cdot \xi}{M} \right)\right|^2.$$
On the other hand, from a further application of the Plancherel identity we have
$$\sum_{\xi \in (\Z/M\Z)^r} \|\hat F(\xi)\|_H^2 = 1$$
and so we can interpret $\|\hat F(\xi)\|_H^2$ as the probability distribution of a random frequency $\boldsymbol{\xi} = (\boldsymbol{\xi}_1,\dots,\boldsymbol{\xi}_r) \in (\Z/M\Z)^r$ (using $(\Z/M\Z)^r$ as the underlying sample space).  The estimate \eqref{fpi} now takes the form
$$ \E \left|\sum_{j=1}^n e\left( \frac{\pi(j) \cdot \boldsymbol{\xi}}{M} \right)\right|^2 \ll_C 1$$
for all $n \leq X$.
If we then define the stochastic completely multiplicative function $\g_X$ by setting $\g_X(p_j) := e( \boldsymbol{\xi}_j / M )$ for $j=1,\dots,r$, and $\g_X(p) := 1$ for all other primes, we obtain 
$$ \E \left|\sum_{j=1}^n \g_X(j)\right|^2 \ll_C 1$$
for all $n \leq X$, as desired.

\begin{remark} It is instructive to see how the above argument breaks down when one tries to use the Dirichlet character example in Example \ref{dir}.  While $\chi$ often has magnitude $1$ in the ordinary (Archimedean) sense, the function $(a_1,\dots,a_r) \mapsto \chi(p_1^{a_1} \dots p_r^{a_r})$ is almost always zero, since the argument $p_1^{a_1} \dots p_r^{a_r}$ of $\chi$ is likely to be a multiple of $q$.  As such, the quantity $\| \hat F(\xi)\|_H^2$ sums to something much less than $1$, and one does not generate a stochastic completely multiplicative function $\g$ with bounded discrepancy.
\end{remark}

\begin{remark} The above arguments also show that Theorem \ref{hilbert} automatically implies an apparently stronger version\footnote{This version was suggested by Harrison Brown in the {\tt Polymath5} project.} of itself, in which one assumes $\|f(n)\|_H \geq 1$ for all $n$, rather than $\|f(n)\|_H = 1$.  Indeed, if $f$ has bounded discrepancy then it must be bounded (since $f(n)$ is the difference of $\sum_{j=1}^n f(j)$ and $\sum_{j=1}^{n-1} f(j)$), and the above arguments then carry through; the sum $\sum_{\xi \in (\Z/M\Z)^r} \|\hat F(\xi)\|_H^2$ is now greater than or equal to $1$, but one can still define a suitable probability distribution from the $\|\hat F(\xi)\|_H^2$ by normalising.
\end{remark}

\begin{remark} If Theorem \ref{vec} failed, then we could find a constant $C>0$ and a stochastic completely multiplicative function $\g\colon \N \to S^1$ such that
$$ \E \left|\sum_{j=1}^n \g(j)\right|^2 \leq C^2
$$
for all $n$.  In particular, by the triangle inequality we have
$$ \E \frac{1}{N} \sum_{n=1}^N \left|\sum_{j=1}^n \g(j)\right|^2 \leq C^2
$$
and hence for each $N$, there exists a \emph{deterministic} completely multiplicative function $g_N\colon \N \to S^1$ such that
$$ \frac{1}{N} \sum_{n=1}^N \left|\sum_{j=1}^n g_N(j)\right|^2 \leq C^2.
$$
Thus, to prove Theorem \ref{vec} (and hence Theorem \ref{hilbert} and Corollary \ref{edp}), it would suffice to obtain a lower bound of the form
\begin{equation}\label{nan}
 \frac{1}{N} \sum_{n=1}^N \left|\sum_{j=1}^n g(j)\right|^2 > \omega(N)
\end{equation}
for \emph{all} deterministic completely multiplicative functions $g\colon \N \to S^1$, all $N \geq 1$, and some function $\omega(N)$ of $N$ that goes to infinity as $N \to \infty$.  This was in fact the preferred form of the Fourier-analytic reduction obtained by the {\tt Polymath5} project \cite{polymath}, \cite{gowers}.  It is conceivable that some refinement of the analysis in this paper in fact yields a bound of the form \eqref{nan}, though this seems to require removing the logarithmic averaging from Theorem \ref{elliott}, as well as avoiding the use of Lemma \ref{tb} below.
\end{remark}

\section{Applying the Elliott-type conjecture}\label{elliott-sec}

In this section we prove Proposition \ref{star}.  Let $\g, C, \eps$ be as in that proposition.
Let $H \geq 1$ be a moderately large natural number depending on $\eps$ to be chosen later, and suppose that $X$ is sufficiently large depending on $H,\eps$.   From \eqref{nax} and the triangle inequality we have
$$ \E \sum_{\sqrt{X} \leq n \leq X} \frac{1}{n} \left|\sum_{j=1}^n \g(j)\right|^2 \ll_C \log X;$$
a similar argument (for $X$ large enough) gives
$$ \E \sum_{\sqrt{X} \leq n \leq X} \frac{1}{n} \left|\sum_{j=1}^{n+H} \g(j)\right|^2 \ll_C \log X,$$
and hence by the triangle inequality
$$ \E \sum_{\sqrt{X} \leq n \leq X} \frac{1}{n} \left|\sum_{j=n+1}^{n+H} \g(j)\right|^2 \ll_C \log X.$$
Thus from Markov's inequality we see with probability $1-O(\eps)$ that
$$ \sum_{\sqrt{X} \leq n \leq X} \frac{1}{n} \left|\sum_{j=n+1}^{n+H} \g(j)\right|^2 \ll_{C,\eps} \log X,$$
which we rewrite as
\begin{equation}\label{nxx}
\sum_{\sqrt{X} \leq n \leq X} \frac{1}{n} \left|\sum_{h=1}^{H} \g(n+h)\right|^2 \ll_{C,\eps} \log X.
\end{equation}
We can expand out the left-hand side of \eqref{nxx} as
$$ \sum_{h_1, h_2 \in [1,H]} \sum_{\sqrt{X} \leq n \leq X} \frac{\g(n+h_1) \overline{\g(n+h_2)}}{n}.$$
The diagonal term $h_1,h_2$ contributes a term of size $\gg H \log X$ to this expression.  Thus, choosing $H$ to be a sufficiently large quantity depending on $C,\eps$, we can apply the triangle inequality and pigeonhole principle to find \emph{distinct} (and stochastic) $\mathbf{h}_1,\mathbf{h}_2 \in [1,H]$ such that
$$ \left|\sum_{\sqrt{X} \leq n\leq X} \frac{\g(n+\mathbf{h}_1) \overline{\g(n+\mathbf{h}_2)}}{n}\right| \gg_{C,\eps,H} \log X.$$
Applying Theorem \ref{elliott} in the contrapositive, we obtain the claim.  (It is easy to check that the quantities $\chi, t$ produced by Theorem \ref{elliott} can be selected to be measurable, for instance one can use continuity to restrict $t$ to be rational and then take a minimal choice of $(\chi,t)$ with respect to some explicit well-ordering of the countable set of possible pairs $(\chi,t)$.)

\begin{remark}\label{gil}  The same argument shows that the hypothesis $|\g(n)| = 1$ may be relaxed to $|\g(n)| \leq 1$, and $\g$ need only be multiplicative rather than completely multiplicative, provided that one has a lower bound of the form $\sum_{\sqrt{X} \leq n \leq X} \frac{|\g(n)|^2}{n} \gg \log X$.  Thus the Dirichlet character example in Example \ref{dir} is in some sense the ``only'' example of a bounded multiplicative function with bounded discrepancy that is large for many values of $n$, in that any other such example must ``pretend'' to be like a (modulated) Dirichlet character.  (We thank Gil Kalai for suggesting this remark.)
\end{remark}

\section{A generalised Borwein-Choi-Coons analysis}\label{bcc-sec}

We can now complete the proof of Theorem \ref{vec} (and thus Theorem \ref{hilbert} and Corollary \ref{edp}).  Our arguments here will be based on those from a website\footnote{{\tt michaelnielsen.org/polymath1/index.php?title=} {\tt Bounded\_discrepancy\_multiplicative\_functions\_do\_not\_correlate\_with\_characters}} of the {\tt Polymath5} project \cite{polymath}, which treated the case in which the functions $\g$ and $\boldsymbol{\chi}$ appearing in Proposition \ref{star} were real-valued (and the quantity $\t$ was set to zero).

Suppose for contradiction that Theorem \ref{vec} failed\footnote{Readers who are more comfortable with measure-theoretic notation than probabilistic notation may prefer to write the argument below starting from the failure of Theorem \ref{mes} rather than Theorem \ref{vec}, replacing expectations with integrals, etc.}, then we can find a constant $C>0$ and a stochastic completely multiplicative function $\g\colon \N \to S^1$ such that
$$
\E \left|\sum_{j=1}^n \g(j)\right|^2 \leq C^2
$$
for all natural numbers $n$.  We now allow all implied constants to depend on $C$, thus
$$
\E \left|\sum_{j=1}^n \g(j)\right|^2 \ll 1$$
for all $n$.  The stochastic nature of $\g$ is a mild technical nuisance for our arguments, but the reader may wish to assume $\g$ as a deterministic completely multiplicative function for a first reading, as this case already captures the key aspects of the argument.

We will need the following large and small parameters, selected in the following order:
\begin{itemize}
\item A quantity $0 < \eps < 1/2$ that is sufficiently small depending on $C$.
\item A natural number $H \geq 1$ that is sufficiently large depending on $C,\eps$.
\item A quantity $0 < \delta < 1/2$ that is sufficiently small depending on $C,\eps,H$.
\item A natural number $k \geq 1$ that is sufficiently large depending on $C,\eps,H$.
\item A real number $X \geq 1$ that is sufficiently large depending on $C,\eps,H,\delta,k$.
\end{itemize}
We will implicitly assume these size relationships in the sequel to simplify the computations, for instance by absorbing a smaller error term into a larger if the latter dominates the former under the above assumptions.  The reader may wish to keep the hierarchy
$$ C \ll \frac{1}{\eps} \ll H \ll \frac{1}{\delta}, k \ll X $$
in mind in the arguments that follow.  One could reduce the number of parameters in the argument by setting $\delta := 1/k$, but this does not lead to significant simplifications in the arguments below.

By Proposition \ref{star}, we see with probability $1-O(\eps)$ that there exists a Dirichlet character $\boldsymbol{\chi}$ of period $\q=O_\eps(1)$ and a real number $\t = O_\eps(X)$ such that
\begin{equation}\label{gh}
  \sum_{p \leq X} \frac{1 - \operatorname{Re} \g(p) \overline{\boldsymbol{\chi}(p)} p^{-i\t}}{p} \ll_\eps 1.
	\end{equation}
By reducing $\boldsymbol{\chi}$ if necessary we may assume that $\boldsymbol{\chi}$ is primitive.

It will be convenient to cut down the size of $\t$.

\begin{lemma}\label{tb} With probability $1-O(\eps)$, one has 
\begin{equation}\label{t-bound}
\t = O_{\eps}(X^\delta).
\end{equation}
\end{lemma}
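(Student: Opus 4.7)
The plan is to apply Proposition~\ref{star} a second time at the smaller scale $X^\delta$ in place of $X$, and to use the pretentious triangle inequality to compare the new approximation to the original one. Since $X$ is chosen sufficiently large at the bottom of the parameter hierarchy, $X^\delta$ is still sufficiently large depending on $\eps$ and $C$ for the hypotheses of Proposition~\ref{star} to hold at the smaller scale. This second application yields, with probability $1-O(\eps)$ and jointly (by a union bound) with the original conclusion, a stochastic Dirichlet character $\boldsymbol{\chi}'$ of period $O_\eps(1)$ and a stochastic real number $\t' = O_\eps(X^\delta)$ such that
$$\sum_{p \leq X^\delta}\frac{1 - \operatorname{Re}\g(p)\overline{\boldsymbol{\chi}'(p)}p^{-i\t'}}{p} \ll_\eps 1.$$

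Next, I would restrict the original estimate~\eqref{gh} to $p \leq X^\delta$ (legal since the summand is non-negative) and combine with the scale-$X^\delta$ estimate above via the elementary inequality $1 - \operatorname{Re}(ab) \leq 2(1-\operatorname{Re} a) + 2(1-\operatorname{Re} b)$ for unit complex numbers $a,b$, obtaining
$$\sum_{p \leq X^\delta}\frac{1 - \operatorname{Re}\psi(p)p^{-i\tau}}{p} \ll_\eps 1,$$
where $\psi := \boldsymbol{\chi}\overline{\boldsymbol{\chi}'}$ is a Dirichlet character of modulus $O_\eps(1)$ and $\tau := \t - \t'$. I would then argue that $\psi$ must be induced by the principal character, at which point the sum reduces up to $O(1)$ (from primes dividing the modulus) to $\sum_{p \leq X^\delta}(1-\cos(\tau\log p))/p$. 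A partial summation using the prime number theorem converts this to the oscillatory integral $\int_{\log 2}^{\delta\log X}(1-\cos(\tau v))/v\,dv+O(1)$, which is of size $\log\log X^\delta+O(1)$ when $|\tau|\geq 1$ and of size $\log(|\tau|\log X^\delta)+O(1)$ in the intermediate regime $1/\log X^\delta\leq|\tau|\leq 1$. Hence the bound $\ll_\eps 1$ forces $|\tau| \ll_\eps 1/\log X^\delta \ll 1$, and in particular $|\t| \leq |\t'| + |\tau| = O_\eps(X^\delta)$.

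The main technical obstacle is excluding the non-principal case $\psi\neq 1$. There the pretentious sum equals $\log\log X^\delta - \log|L(1+i\tau,\psi)|+O(1)$, and the classical upper bound $|L(1+i\tau,\psi)|\ll\log(q(2+|\tau|))$ combined with the a~priori range $|\tau| \leq O_\eps(X)$ produced by the first application of Proposition~\ref{star} fails to yield a contradiction by itself, since the putative value $\log\log X$ of $\log|L|$ at the extreme differs from the target $\log\log X^\delta$ only by $\log(1/\delta)$. To circumvent this I would either invoke a pretentious uniqueness statement for the best approximating primitive character at a given scale, or, more pragmatically, simply redefine the pair $(\boldsymbol{\chi},\t)$ to be the new pair $(\boldsymbol{\chi}',\t')$: the estimate~\eqref{gh} then survives in the weaker form with implied constant $O_{\eps,\delta}(1)$, since the tail $X^\delta<p\leq X$ is bounded trivially by $2\sum_{X^\delta<p\leq X}p^{-1}=2\log(1/\delta)+O(1)$ using $1-\operatorname{Re}(\cdot)\leq 2$ and Mertens' theorem. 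This weakening is acceptable because $\delta$ sits below $\eps$ in the parameter hierarchy of Section~\ref{bcc-sec}.
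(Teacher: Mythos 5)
Your opening moves coincide with the paper's: a second application of Proposition~\ref{star} at scale $X^\delta$, followed by the pretentious triangle inequality, yielding $\sum_{p\le X^\delta}\frac{1-\operatorname{Re}\psi(p)p^{-i\tau}}{p}\ll_\eps 1$ with $\psi=\boldsymbol{\chi}\overline{\boldsymbol{\chi}'}$ and $\tau=\t-\t'$. The gap is in how you rule out large $\tau$. The only regime that needs to be excluded is $X^\delta\le|\tau|\ll_\eps X$ (if $|\tau|<X^\delta$ you are already done), and there ``partial summation using the prime number theorem'' does not convert $\sum_{p\le X^\delta}p^{-1-i\tau}$ into your oscillatory integral: integrating the PNT error term $E(u)$ against $d(u^{-i\tau})$ produces, after integration by parts, a term $i\tau\int_2^{X^\delta}u^{-i\tau-1}E(u)\,du=O(|\tau|)$, which is useless already for moderately large $|\tau|$ and certainly when $|\tau|$ is a power of $X$. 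Estimating $\sum_{p\le y}p^{-1-i\tau}$ for $|\tau|$ as large as $y^{1/\delta}$ is genuinely a statement about $L(s,\psi)$ (including the principal case, i.e.\ $\zeta$) at height up to $X$; the paper supplies exactly this via the Vinogradov--Korobov zero-free region, a crude bound on $\log L$ inside it, and the contour-shifting lemma of Matom\"aki--Radziwi{\l}{\l}, restricting to primes $p\ge\exp((\log X)^{2/3})$ so that $1+1/\log p+i\tau$ lies in the zero-free region. This is the entire technical content of the lemma and it is absent from your argument. Note also that with this input the principal and non-principal cases are handled uniformly (the constraint $|\tau|\ge X^\delta$ keeps one away from the pole of $\zeta$), so your case split is an artifact of trying to avoid the zero-free-region machinery.

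Your fallback for the non-principal case --- replacing $(\boldsymbol{\chi},\t)$ by $(\boldsymbol{\chi}',\t')$ and accepting a constant $O_{\eps,\delta}(1)$ in \eqref{gh} --- also fails, because it misidentifies which comparison in the parameter hierarchy matters. The constant in \eqref{gh} propagates through \eqref{gap} and \eqref{1s} into the right-hand side of \eqref{contra}, and the final contradiction in Section~\ref{bcc-sec} is obtained by taking $H$ large while that right-hand side stays $O_\eps(1)$, i.e.\ independent of $H$. Since $\delta$ must be chosen small depending on $H$ (one needs roughly $\delta H^2\ll 1$ to absorb the contribution of $n<X^{2\delta}$), a $\delta$-dependent constant in \eqref{contra} is in effect a constant growing with $H$, and no contradiction is reached. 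The vaguer alternative (a ``pretentious uniqueness'' statement valid for $|\t-\t'|$ as large as $X$ using only primes up to $X^\delta$) is essentially equivalent to what has to be proved and again requires the zero-free-region input.
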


\begin{proof} By Proposition \ref{star} with $X$ replaced by $X^\delta$, we see that with probability $1-O(\eps)$, one can find a Dirichlet character $\boldsymbol{\chi}'$ of period $\q' = O_\eps(1)$ and a real number $\t' = O_\eps( X^\delta )$ such that
$$
  \sum_{p \leq X^\delta} \frac{1 - \operatorname{Re} \g(p) \overline{\boldsymbol{\chi}'(p)} p^{-i\t'}}{p} \ll_\eps 1.$$
We may restrict to the event that $|\t'-\t| \geq X^\delta$, since we are done otherwise.
Applying the pretentious triangle inequality (see \cite[Lemma 3.1]{gs}), we conclude that
\begin{equation}\label{psum}
  \sum_{p \leq X^\delta} \frac{1 - \operatorname{Re} \boldsymbol{\chi}(p) \overline{\boldsymbol{\chi}'(p)} p^{-i(\t'-\t)}}{p} \ll_\eps 1.
\end{equation}
The character $\boldsymbol{\chi} \overline{\boldsymbol{\chi}'}$ has period $O_\eps(1)$.  Applying the Vinogradov-Korobov zero-free region for $L(\cdot,\boldsymbol{\chi} \overline{\boldsymbol{\chi}'})$ (see \cite[\S 9.5]{mont}), we see that $L(\sigma+it, \boldsymbol{\chi} \overline{\boldsymbol{\chi}'}) \neq 0$ for $|t| \geq 10$ and
$$ \sigma \geq 1 - \frac{c_\eps}{(\log |t|)^{2/3} (\log\log |t|)^{1/3}}$$
for some $c_\eps>0$ depending only on $\eps$; furthermore, an inspection of the Vinogradov-Korobov arguments (based on estimation of the logarithmic derivative of $L(\cdot,\boldsymbol{\chi} \overline{\boldsymbol{\chi}'})$ in the zero-free region) in fact yields the crude bound\footnote{One can certainly improve the right-hand side here with a more careful argument; cf. \cite[(11.6)]{mv}.  But for the current application, a logarithmic bound will suffice.}
$$ |\log L(\sigma+it, \boldsymbol{\chi} \overline{\boldsymbol{\chi}'})| \ll \log^{O(1)} |t|$$
in this region (using a suitable branch of the logarithm), possibly after shrinking $c_\eps$ if necessary.    Using the contour-shifting arguments in \cite[Lemma 2]{mr-short} 
and the bounds $X^\delta \leq |\t'-\t| \ll_\eps X$, it is then not difficult to show that
$$
 \sum_{\exp((\log X)^{2/3}) \leq p \leq X^\delta} \frac{1 - \operatorname{Re}\boldsymbol{\chi}(p) \overline{\boldsymbol{\chi}'(p)} p^{-i(\t'-\t)}}{p} \gg \log\log X$$
if $X$ is sufficiently large depending on $\eps,\delta$, a contradiction (note that the summands in \eqref{psum} are nonnegative).  The claim follows.
\end{proof}
	
Let us now condition to the probability $1-O(\eps)$ event that $\boldsymbol{\chi}$, $\mathbf{t}$ exist obeying \eqref{gh} and the bound \eqref{t-bound}; we can of course do this as $\eps$ is assumed to be small.

The bound \eqref{gh} asserts that $\g$ ``pretends'' to be like the completely multiplicative function $n \mapsto \boldsymbol{\chi}(n) n^{i\t}$.  We can formalise this by making the factorisation
\begin{equation}\label{dos}
 \g(n) := \tilde{\boldsymbol{\chi}}(n) n^{i\t} \h(n) 
\end{equation}
where $\tilde{\boldsymbol{\chi}}$ is the completely multiplicative function of magnitude $1$ defined by setting $\tilde{\boldsymbol{\chi}}(p) := \boldsymbol{\chi}(p)$ for $p \nmid q$ and $\tilde{\boldsymbol{\chi}}(p) := \g(p) p^{-i\t}$ for $p|q$, and $\h$ is the completely multiplicative function of magnitude $1$ defined by setting $\h(p) := \g(p) \overline{\boldsymbol{\chi}(p)} p^{-i\t}$ for $p \nmid q$, and $h(p) = 1$ for $p|q$.  The function $\tilde{\boldsymbol{\chi}}$ should be compared with the function $\tilde \chi_3$ in Example \ref{bcc-ex} and the function $\g$ in Example \ref{rbc}.

With the above notation, the bound \eqref{gh} simplifies to
\begin{equation}\label{gap}
 \left|\sum_{p \leq X} \frac{1-\operatorname{Re} \h(p)}{p}\right| \ll_\eps 1.
\end{equation}

The model case to consider here is when $\t = 0$ and $\h = 1$, in which case $\g = \tilde \chi$.  In this case, one could skip directly ahead to \eqref{contra} below.  Of course, in general $\t$ will be non-zero (albeit not too large) and $\h$ will not be identically $1$ (but ``pretends'' to be $1$ in the sense of \eqref{gap}).   We will now perform some manipulations to remove the $n^{i\t}$ and $\h$ factors from $\g$ and isolate medium-length sums \eqref{contra} of the $\tilde{\boldsymbol{\chi}}$ factor, which are more tractable to compute with than the corresponding sums of $\g$; then we will perform more computations to arrive at an expression \eqref{tex} just involving $\boldsymbol{\chi}$ which we will be able to control fairly easily.  

We turn to the details.  The first step is to eliminate the role of $n^{i\t}$.   From \eqref{nax} and the triangle inequality we have
$$
\E \frac{1}{H} \sum_{H < H' \leq 2H} \left|\sum_{m=1}^{H'} \g(n+m)\right|^2 \ll 1 $$
for all $n$ (even after conditioning to the $1-O(\eps)$ event mentioned above).  The $\frac{1}{H} \sum_{H < H' \leq 2H}$ averaging will not be used until much later in the argument, and the reader may wish to ignore it for the time being.

By \eqref{dos}, the above estimate can be written as
$$
\E \frac{1}{H} \sum_{H < H' \leq 2H} \left|\sum_{m=1}^{H'} \tilde{\boldsymbol{\chi}}(n+m) (n+m)^{i\t} \h(n+m)\right|^2 \ll 1.$$
For $n \geq X^{2\delta}$, we can use \eqref{t-bound} and Taylor expansion to conclude that $(n+m)^{i\t} = n^{i\t} + O_{\eps,H,\delta}(X^{-\delta})$.  The contribution of the error term is negligible, thus
$$\E \frac{1}{H} \sum_{H < H' \leq 2H} \left|\sum_{m=1}^{H'}\tilde{\boldsymbol{\chi}}(n+m) n^{i\t} \h(n+m)\right|^2 \ll 1$$
for all $n \geq X^{2\delta}$.  We can factor out the $n^{i\t}$ factor to obtain
$$\E \frac{1}{H} \sum_{H < H' \leq 2H} \left|\sum_{m=1}^{H'} \tilde{\boldsymbol{\chi}}(n+m) h(n+m)\right|^2 \ll 1.$$
For $n < X^{2\delta}$ we can crudely bound the left-hand side by $H^2$.  If $\delta$ is sufficiently small, we can then sum weighted by $\frac{1}{n^{1+1/\log X}}$ and conclude that
$$\E \frac{1}{H} \sum_{H < H' \leq 2H}  \sum_n \frac{ \left|\sum_{m=1}^{H'} \tilde{\boldsymbol{\chi}}(n+m) \h(n+m)\right|^2}{n^{1+1/\log X}}  \ll \log X.$$
(The zeta function type weight of $\frac{1}{n^{1+1/\log X}}$ will be convenient later in the argument when one has to perform some multiplicative number theory, as the relevant sums can be computed quite directly and easily using Euler products.)
Thus, with probability $1-O(\eps)$, one has from Markov's inequality that
$$ \frac{1}{H} \sum_{H < H' \leq 2H} \sum_n \frac{\left|\sum_{m=1}^{H'} \tilde{\boldsymbol{\chi}}(n+m) \h(n+m)\right|^2}{n^{1+1/\log X}}  \ll_\eps \log X.$$
We condition to this event, which we may do as $\eps$ is assumed to be small.  From this point onwards, our arguments will be purely deterministic in nature (in particular, one can ignore the boldface fonts in the arguments below if one wishes).

We have successfully eliminated the role of $n^{i\t}$; we now work to eliminate $\h$.  To do this we will have to partially decouple the $\tilde{\boldsymbol{\chi}}$ and $\h$ factors in the above expression, which can be done\footnote{The argument here was loosely inspired by the Maier matrix method \cite{maier}.} by exploiting the almost periodicity properties of $\tilde{\boldsymbol{\chi}}$ as follows.
Call a residue class $a \ (\q^k)$ \emph{bad} if $a+m$ is divisible by $p^k$ for some $p|\q$ and $1 \leq m \leq 2H$, and \emph{good} otherwise.  We restrict $n$ to good residue classes, thus
$$ \frac{1}{H} \sum_{H < H' \leq 2H}\sum_{a \in [1,\q^k], \hbox{ good}} \sum_{n = a \ (\q^k)} \frac{\left|\sum_{m=1}^{H'}\tilde{\boldsymbol{\chi}}(n+m) \h(n+m)\right|^2}{n^{1+1/\log X}} $$
$$\ll_\eps \log X.$$
By Cauchy-Schwarz, we conclude that
\begin{align*}
&\frac{1}{H} \sum_{H < H' \leq 2H} \sum_{a \in [1,\q^k], \hbox{ good}} \left |\sum_{n = a \ (\q^k)} \frac{ \sum_{m=1}^{H'} \tilde{\boldsymbol{\chi}}(n+m) \h(n+m)}{n^{1+1/\log X}} \right|^2\\
&\quad\quad  \ll_\eps \frac{ \log^2 X}{\q^k}.
\end{align*}
Now we claim that for $n$ in a given good residue class $a \ (\q^k)$, the quantity $\tilde{\boldsymbol{\chi}}(n+m)$ does not depend on $n$.  Indeed, by hypothesis, $(n+m,\q^k) = (a+m,\q^k)$ is not divisible by $p^k$ for any $p|\q$ and is thus a factor of $\q^{k-1}$, and $\frac{n+m}{(n+m,\q^k)} = \frac{n+m}{(a+m,\q^k)}$ is coprime to $\q$.  We then factor
\begin{align*}
\tilde{\boldsymbol{\chi}}(n+m)&= \tilde{\boldsymbol{\chi}}((n+m,\q^k)) \tilde{\boldsymbol{\chi}}\left( \frac{n+m}{(n+m,\q^k)} \right)\\
&= \tilde{\boldsymbol{\chi}}((a+m,\q^k)) \boldsymbol{\chi}\left( \frac{n+m}{(a+m,\q^k)} \right)\\
&= \tilde{\boldsymbol{\chi}}((a+m,\q^k)) \boldsymbol{\chi}\left( \frac{a+m}{(a+m,\q^k)} \right)
\end{align*}
where in the last line we use the periodicity of $\boldsymbol{\chi}$.  Thus we have $\tilde{\boldsymbol{\chi}}(n+m) = \tilde{\boldsymbol{\chi}}(a+m)$, and so
\begin{align*}
&\frac{1}{H} \sum_{H < H' \leq 2H} \sum_{a \in [1,\q^k], \hbox{ good}}\left |\sum_{m=1}^{H'} \tilde{\boldsymbol{\chi}}(a+m) \sum_{n = a \ (\q^k)} \frac{\h(n+m)}{n^{1+1/\log X}}\right |^2\\
&\quad\quad \ll_\eps \frac{ \log^2 X}{\q^k}.
\end{align*}
Shifting $n$ by $m$, we see that
$$ \sum_{n = a \ (\q^k)} \frac{ \h(n+m) }{n^{1+1/\log X}}= \sum_{n = a+m \ (\q^k)} \frac{\h(n)}{n^{1+1/\log X}}  + O_H(1) $$
and thus (for $X$ large enough)
\begin{equation}\label{jock}
\begin{split}
&\frac{1}{H} \sum_{H < H' \leq 2H} \sum_{a \in [1,\q^k], \hbox{ good}} \left|\sum_{m=1}^{H'} \tilde{\boldsymbol{\chi}}(a+m) \sum_{n = a+m \ (\q^k)} \frac{\h(n)}{n^{1+1/\log X}}\right|^2 \\
&\quad \ll_\eps \frac{\log^2 X}{\q^k} .
\end{split}
\end{equation}

Now, we perform some multiplicative number theory to understand the innermost sum in \eqref{jock}, with the aim of showing that the summand here is approximately equidistributed modulo $\q^k$.  From taking Euler products, we have
$$
\sum_n \frac{\h(n)}{n^{1+1/\log X}} = \boldsymbol{\mathfrak S}$$
where $\boldsymbol{\mathfrak S}$ is the Euler product 
$$\boldsymbol{\mathfrak S} := \prod_p \left(1 - \frac{\h(p)}{p^{1+1/\log X}}\right)^{-1}.$$
From \eqref{gap} and Mertens' theorem one can easily verify that
\begin{equation}\label{1s}
\log X \ll_\eps |\boldsymbol{\mathfrak S}| \ll_\eps \log X.
\end{equation}
More generally, for any Dirichlet character $\chi_1$ we have
$$
\sum_n \frac{\chi_1(n) \h(n)}{n^{1+1/\log X}}  =  \prod_p \left(1 - \frac{\h(p) \chi_1(p)}{p^{1+1/\log X}}\right)^{-1}.$$
If $\chi_1$ is a non-principal character of period dividing $\q^k$, then the $L$-function $L(s,\chi) := \sum_n \frac{\chi_1(n)}{n^s}$ is analytic near $s=1$, and in particular we have
$$ L( 1+\frac{1}{\log X}, \chi_1) \ll_{\q,k} 1.$$
We conclude that
\begin{align*}
\sum_n \frac{\chi_1(n) \h(n)}{n^{1+1/\log X}}  &=  L(1+\frac{1}{\log X}, \chi_1) \prod_p \left(1 - \frac{\h(p) \chi_1(p)}{p^{1+1/\log X}}\right)^{-1} \left(1 - \frac{\chi_1(p)}{p^{1+1/\log X}}\right) \\
&\ll_{\q,k} \exp\left( \sum_p \frac{|1-\h(p)|}{p^{1+1/\log X}} \right) \\
&\ll_{\q,k} \exp\left( \sum_{p \leq X} \frac{|1-\h(p)|}{p} \right) \\
&\ll_{\q,k} \exp\left( \sum_{p \leq X} \frac{O( 1-\operatorname{Re} \h(p))^{1/2}}{p}   \right) \\
&\ll_{\q,k} \exp\left( O\left( (\log\log X) \sum_{p \leq X} \frac{1-\operatorname{Re} \h(p)}{p}\right)^{1/2} \right) \\
&\ll_{\q,k} \exp( O_\eps( (\log\log X)^{1/2} ) )
\end{align*}
where we have used the Cauchy-Schwarz inequality, Mertens' theorem, and \eqref{gap}. For a principal character $\chi_0$ of period $r$ dividing $\q^k$ we have
\begin{align*}
\sum_n \frac{\chi_0\h(n)}{n^{1+1/\log X}} &= \prod_{p \nmid r} \left(1 - \frac{\h(p)}{p^{1+1/\log X}}\right)^{-1}\\
&= \boldsymbol{\mathfrak S} \prod_{p | r} \left(1 - \frac{1}{p^{1+1/\log X}}\right) \\
&= \boldsymbol{\mathfrak S} \left(1 + O_\eps\left( \frac{1}{\log X} \right) \right) \prod_{p | r} \left(1 - \frac{1}{p}\right) \\
&= \frac{\phi(r)}{r} \boldsymbol{\mathfrak S} + O_{\eps}(1)
\end{align*}
thanks to \eqref{1s} and the fact that $\h(p)=1$ for all $p|r$, and that all prime factors of $r$ divide $\q$ and are thus of size $O_\eps(1)$.  By expansion into Dirichlet characters we conclude that
$$
\sum_{n = b\ (r)} \frac{\h(n)}{n^{1+1/\log X}} = \frac{\boldsymbol{\mathfrak S}}{r}  + O_{\q,k}(\exp(O_\eps((\log\log X)^{1/2})))$$
for all $r|q^k$ and primitive residue classes $b \ (r)$.  For non-primitive residue classes $b \ (r)$, we write $r = (b,r) r'$ and $b = (b,r) b'$.  The previous arguments then give
$$
\sum_{n = b' \ (r')} \frac{\h(n)}{n^{1+1/\log X}} = \frac{\boldsymbol{\mathfrak S}}{r'}  + O_{\q,k}(\exp(O_\eps((\log\log X)^{1/2})))$$
which since $\h((b,r))=1$ gives (again using \eqref{1s})
$$
\sum_{n = b \ (r)} \frac{\h(n)}{n^{1+1/\log X}} = \frac{\boldsymbol{\mathfrak S}}{r}  + O_{\q,k}(\exp(O_\eps((\log\log X)^{1/2})))$$
for all $b \ (r)$ (not necessarily primitive).  Inserting this back into \eqref{jock} we see that
$$
\frac{1}{H} \sum_{H < H' \leq 2H}\sum_{a \in [1,\q^k] \hbox{ good}} \left| \sum_{m=1}^{H'}\tilde{\boldsymbol{\chi}}(a+m) \left(\frac{\boldsymbol{{\mathfrak S}}}{\q^k}  + O_{\q,k}(\exp(O_\eps((\log\log X)^{1/2})))\right) \right|^2 \ll_\eps \frac{\log^2 X}{\q^k}.$$
The contribution of the $O_{\q,k}(\exp(O_\eps((\log\log X)^{1/2})))$ error term here can be shown by \eqref{1s} to be at most $c_\eps \log^2 X / \q^k$ in magnitude if $X$ is large enough, for any $c_\eps>0$ depending only on $\eps$.  Removing this error term and then applying \eqref{1s} again to cancel off the ${\mathfrak S}$ term, we conclude that
\begin{equation}\label{contra}
\frac{1}{\q^k} \sum_{a \in [1,\q^k] \hbox{ good}} \frac{1}{H} \sum_{H < H' \leq 2H} \left|\sum_{m=1}^{H'} \tilde{\boldsymbol{\chi}}(a+m)\right|^2 \ll_\eps 1.
\end{equation}

We have now eliminated both $\t$ and $\h$.  The remaining task is to establish some lower bound on the discrepancy of medium-length sums of $\tilde{\boldsymbol{\chi}}$ that will contradict \eqref{contra}.  As mentioned above, this will be a more complicated variant of the analysis in Examples \ref{bcc-ex}, \ref{vbc}, \ref{rbc} in which the perfect orthogonality in Example \ref{vbc} is replaced by an almost orthogonality argument.

We turn to the details.  We first dispose of the easy case\footnote{In this case, many of the previous manipulations become degenerate, and one could have disposed of this case by a simplified version of the above arguments.} when $\q=1$.  In that case $\tilde{\boldsymbol{\chi}}$ is identically one, and the left-hand side simplifies to $\frac{1}{H} \sum_{H < H' \leq 2H} (H')^2$, which is comparable to $H^2$ and leads to a contradiction since $H$ is large.    Thus we may restrict to the event that $\q > 1$, so that the primitive character $\boldsymbol{\chi}$ is non-principal.

Next, we expand \eqref{contra} to obtain
$$
\frac{1}{H} \sum_{H < H' \leq 2H} \sum_{m_1,m_2 \in [1,H']} \sum_{a \in [1,\q^k], \hbox{ good}} \tilde{\boldsymbol{\chi}}(a+m_1) \overline{\tilde{\boldsymbol{\chi}}(a+m_2)} \ll_\eps \q^k.$$
Write $d_1 := (a+m_1,\q^k)$ and $d_2 := (a+m_2,\q^k)$, thus $d_1,d_2 | \q^{k-1}$ and for $i=1,2$ we have
$$ \tilde{\boldsymbol{\chi}}(a+m_i) = \tilde{\boldsymbol{\chi}}(d_i) \boldsymbol{\chi}\left( \frac{a+m_i}{d_i} \right).$$
We thus have
\begin{equation}\label{stop}
\begin{split}
&\sum_{d_1,d_2|\q^{k-1}} \tilde{\boldsymbol{\chi}}(d_1) \overline{\tilde{\boldsymbol{\chi}}}(d_2)
\frac{1}{H} \sum_{H < H' \leq 2H} \sum_{m_1,m_2 \in [1,H']} \\
&\quad \sum_{a \in [1,\q^k], \hbox{ good}: (a+m_1,\q^k)=d_1, (a+m_2,\q^k)=d_2} \boldsymbol{\chi}\left(\frac{a+m_1}{d_1}\right) \overline{\boldsymbol{\chi}}\left(\frac{a+m_2}{d_2}\right) \ll_\eps \q^k.
\end{split}
\end{equation}
We reinstate the bad $a$.  The number of such $a$ is at most
$$ H \sum_{p|\q} p^{-k} \q^k \ll H 2^{-k} \q^k \sum_{n \geq 2} \frac{1}{(n/2)^k} \ll H 2^{-k} \q^k,$$
so their total contribution here is $O_H(2^{-k} \q^k)$ which is negligible, thus we may drop the requirement in \eqref{stop} that $a$ is good.

Note that as $\boldsymbol{\chi}$ is already restricted to numbers coprime to $\q$, and $d_1,d_2$ divide $\q^{k-1}$, we may replace the constraints $(a+m_i,\q^k)=d_i$ with $d_i|a+m_i$ for $i=1,2$.  Summarising these modifications, we have arrived at the estimate
\begin{equation}\label{stop-2}
\begin{split}
&\sum_{d_1,d_2|\q^{k-1}} \tilde{\boldsymbol{\chi}}(d_1) \overline{\tilde{\boldsymbol{\chi}}}(d_2)
\frac{1}{H} \sum_{H < H' \leq 2H} \sum_{m_1,m_2 \in [1,H']} \\
&\quad \sum_{a \in [1,\q^k]: d_1|a+m_1; d_2|a+m_2} \boldsymbol{\chi}\left(\frac{a+m_1}{d_1}\right) \overline{\boldsymbol{\chi}}\left(\frac{a+m_2}{d_2}\right) \ll_\eps \q^k.
\end{split}
\end{equation}

Consider the contribution to the left-hand side of \eqref{stop-2} of an off-diagonal term $d_1 \neq d_2$ for a fixed choice of $m_1,m_2$.  To handle these terms we use the Fourier transform to expand the  character $\boldsymbol{\chi}(n)$ (which, as mentioned before Lemma \ref{tb}, can be taken to be primitive) as a linear combination of $e( \xi n / \q )$ for $\xi \in (\Z/\q\Z)^\times$.  Thus, the function $n \mapsto 
1_{d_1|n} \boldsymbol{\chi}\left(\frac{n}{d_1}\right)$ can be written as a linear combination of $n \mapsto 1_{d_1|n} e(\xi n/d_1 \q)$ for $\xi \in \Z$ coprime to $\q$, which by Fourier expansion of the $1_{d_1|n}$ factor (and the fact that all the prime factors of $d_1$ also divide $\q$) can in turn be written as a linear combination of $n \mapsto e(\xi n/d_1\q)$ for $(\xi,d_1\q)=1$.  Translating, we see that the function
$$ a \mapsto 1_{d_1|a+m_1} \boldsymbol{\chi}\left(\frac{a+m_1}{d_1}\right) $$
can be written as a linear combination of $a \mapsto e(\xi a/d_1\q)$ for $(\xi,d_1\q)=1$.  Similarly
$$ a \mapsto 1_{d_2|a+m_2} \boldsymbol{\chi}\left(\frac{a+m_2}{d_2}\right) $$
can be written as a linear combination of $a \mapsto e(\xi a/d_2\q)$ for $(\xi,d_2\q)=1$.  If $d_1 \neq d_2$, then the frequencies involved here are distinct; since $\q^k$ is a multiple of both $d_1\q$ and $d_2\q$, we conclude the perfect cancellation\footnote{We thank Andrew Granville for observing this perfect cancellation, which allowed for some simplifications to this part of the argument.}
\begin{equation}\label{perf}
 \sum_{a \in [1,\q^k]: d_1|a+m_1; d_2|a+m_2} \boldsymbol{\chi}\left(\frac{a+m_1}{d_1}\right) \overline{\boldsymbol{\chi}}\left(\frac{a+m_2}{d_2}\right) = 0.
\end{equation}
Thus we only need to consider the diagonal contribution $d_1 = d_2$ to \eqref{stop-2}.  For these diagonal terms we do not perform a Fourier expansion of the character $\boldsymbol{\chi}$.  The $ \tilde{\boldsymbol{\chi}}(d_1) \overline{\tilde{\boldsymbol{\chi}}}(d_2)$ terms helpfully cancel, and we obtain the bound
\begin{equation}\label{tex}
\begin{split}
& \sum_{d|\q^{k-1}}
\frac{1}{H} \sum_{H < H' \leq 2H} \sum_{m_1,m_2 \in [1,H']} \sum_{a \in [1,\q^k]: d | a+m_1, a+m_2}\\
&\quad \boldsymbol{\chi}\left(\frac{a+m_1}{d}\right) \overline{\boldsymbol{\chi}}\left(\frac{a+m_2}{d}\right) \ll_\eps \q^k.
\end{split}
\end{equation}
We have now eliminated $\tilde{\boldsymbol{\chi}}$, leaving only the Dirichlet character $\boldsymbol{\chi}$ which is much easier to work with.
We gather terms and write the left-hand side as
$$
 \sum_{d|\q^{k-1}}
\frac{1}{H} \sum_{H < H' \leq 2H} \sum_{a \in [1,\q^k]} \left|\sum_{m \in [1,H']: d|a+m} \boldsymbol{\chi}\left(\frac{a+m}{d}\right)\right|^2.$$
The summand in $d$ is now non-negative.  We can thus discard all the $d$ that are not of the form $d = \q^i$ with $\q^i < \sqrt{H}$, to conclude that
$$ \sum_{i: \q^i < \sqrt{H}} \frac{1}{H} \sum_{H < H' \leq 2H} \sum_{a \in [1,\q^k]} \left|\sum_{m \in [1,H']: \q^i|a+m} \boldsymbol{\chi}\left(\frac{a+m}{\q^i}\right)\right|^2  \ll_\eps \q^k.$$
It is now that we finally take advantage of the averaging $\frac{1}{H} \sum_{H < H' \leq 2H}$ to simplify the $m$ summation.  Observe from the triangle inequality that for any $H' \in [H, 3H/2]$ and $a \in [1,\q^k]$ one has
\begin{align*}
& \left|\sum_{H' < m \leq H' + \q^i: \q^i|a+m} \boldsymbol{\chi}\left(\frac{a+m}{\q^i}\right)\right|^2\\
&\quad  \ll \left|\sum_{m \in [1,H']: \q^i|a+m} \boldsymbol{\chi}\left(\frac{a+m}{\q^i}\right)\right|^2 + \left|\sum_{m \in [1,H'+\q^i]: \q^i|a+m} \boldsymbol{\chi}\left(\frac{a+m}{\q^i}\right)\right|^2;
\end{align*}
summing over $i,H',a$ we conclude that
$$ \sum_{i: \q^i < \sqrt{H}} \frac{1}{H} \sum_{H' \in [H,3H/2]} \sum_{a \in [1,\q^k]} \left |\sum_{H' < m \leq H'+\q^i: \q^i|a+m} \boldsymbol{\chi}\left(\frac{a+m}{\q^i}\right)\right|^2  \ll_\eps \q^k.$$
In particular, by the pigeonhole principle there exists $\mathbf{H}' \in [H,3H/2]$ such that
$$ \sum_{i: \q^i < \sqrt{H}} \sum_{a \in [1,\q^k]}\left |\sum_{\mathbf{H}' < m \leq \mathbf{H}'+\q^i: \q^i|a+m} \boldsymbol{\chi}\left(\frac{a+m}{\q^i}\right)\right|^2  \ll_\eps \q^k.$$
Shifting $a$ by $\mathbf{H}'$ and discarding some terms, we conclude that
$$ \sum_{i: \q^i < \sqrt{H}} \sum_{a \in [1,\q^k/2]} \left|\sum_{0 < m \leq \q^i: \q^i|a+m} \boldsymbol{\chi}\left(\frac{a+m}{\q^i}\right)\right|^2  \ll_\eps \q^k.$$
Observe that for a fixed $a$ there is exactly one $m$ in the inner sum, and $\frac{a+m}{\q^i} = \left\lfloor \frac{a}{\q^i} \right\rfloor + 1$.  Thus we have
$$ \sum_{i: \q^i < \sqrt{H}} \sum_{a \in [1,\q^k/2]} \left|\boldsymbol{\chi}\left(\left\lfloor \frac{a}{\q^i}\right\rfloor + 1\right)\right|^2  \ll_\eps \q^k.$$
Making the change of variables $b := \left\lfloor \frac{a}{\q^i} \right\rfloor + 1$ and discarding some terms, we thus have
$$ \sum_{i: \q^i < \sqrt{H}} \q^i \sum_{b \in [1,\q^{k-i}/4]} |\boldsymbol{\chi}(b)|^2 \ll_\eps \q^k.$$
But $b \mapsto |\boldsymbol{\chi}(b)|^2$ is periodic of period $\q$ with mean $\gg_\eps 1$, thus
$$  \sum_{b \in [1,\q^{k-i}/4]} |\boldsymbol{\chi}(b)|^2 \gg_\eps \q^{k-i}$$
which when combined with the preceding bound yields
$$ \sum_{i: \q^i < \sqrt{H}} 1 \ll_\eps 1,$$
which leads to a contradiction for $H$ large enough (note the logarithmic growth in $H$ here, which is consistent with the growth rates in Example \ref{vbc}).  The claim follows.

\section{Sums of multiplicative functions}\label{sumsec}

One corollary of Corollary \ref{edp} is that if $f: \N \to \{-1,+1\}$ is a completely multiplicative function, then
$$ \sup_n |\sum_{j=1}^n f(j)| = +\infty.$$
One can ask (as was done in \cite{erdos}) whether the same claim holds if $f$ is only assumed to be multiplicative rather than completely multiplicative.  As noted in \cite{coons}, there is a simple counterexample, namely the multiplicative function $\chi_2: \N \to \{-1,+1\}$ defined by setting $\chi_2(n) := +1$ when $n$ is odd and $\chi_2(n) := -1$ when $n$ is even.  A bit more generally, any function of the form $f = \chi_2 h$ is a counterexample, where $h: \N \to \{-1,+1\}$ is a multiplicative function such that $h(2^j)=1$ for all $j$, and $h(p^j)=1$ for all but finitely many prime powers $p^j$, since such functions are periodic with mean zero thanks to the Chinese remainder theorem.  In the converse direction, the methods of this paper can be used to show

\begin{theorem}  Let $f: \N \to \{-1,+1\}$ be a multiplicative function such that $\sup_n |\sum_{j=1}^n f(j)| < +\infty$.  Then $f(2^j) = -1$ for all $j$, and
$$ \sum_p \frac{1-f(p)}{p} < \infty.$$
\end{theorem}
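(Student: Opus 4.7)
The plan is to follow the pretentious analysis developed in Sections~\ref{elliott-sec} and~\ref{bcc-sec}, using the real-valued nature of $f$ to collapse all pretensions down to the principal character with zero frequency, and then extract the values $f(2^j)$ from a Wirsing-type mean value theorem. Since $f\colon\N\to\{-1,+1\}$ has bounded partial sums, the deterministic analogue of \eqref{nax} holds trivially. Applying the version of Proposition~\ref{star} noted in Remark~\ref{gil} (the hypothesis $\sum_{\sqrt X\le n\le X}|f(n)|^2/n\gg\log X$ is automatic since $|f|\equiv 1$), followed by Lemma~\ref{tb}, gives for every small $\eps>0$ and every large enough $X$ a primitive Dirichlet character $\chi$ of modulus $q=O_\eps(1)$ and a real $t=O_\eps(X^\delta)$ with
$$\sum_{p\le X}\frac{1-\operatorname{Re} f(p)\overline{\chi(p)}p^{-it}}{p}\ll_\eps 1. \qquad(\ast)$$

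The next step uses the real-valued nature of $f$. Conjugating $(\ast)$ shows $f$ also pretends to $\bar\chi\,n^{-it}$, and the pretentious triangle inequality (as invoked in the proof of Lemma~\ref{tb}) yields
$$\sum_{p\le X}\frac{1-\operatorname{Re}(\chi(p)^2 p^{2it})}{p}\ll_\eps 1.$$
Since $\sum_p(1-\operatorname{Re}\psi(p)p^{i\tau})/p$ diverges for every non-principal Dirichlet character $\psi$ and for every nonzero real $\tau$ (by the nonvanishing of $L(s,\psi)$ on the line $\operatorname{Re}(s)=1$, a soft consequence of the Vinogradov--Korobov bounds used in Lemma~\ref{tb}), we conclude $t=0$ and $\chi^2$ principal, so $\chi$ is a real primitive Dirichlet character. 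If $\chi$ were non-principal, the generalised Borwein--Choi--Coons analysis of Section~\ref{bcc-sec}, carried out with $\t=0$ and with $\g$ replaced by the deterministic $f$, would produce a lower bound on the discrepancy going to infinity, contradicting the hypothesis. Hence $\chi$ is principal, so $(\ast)$ with $t=0$ gives
$$\sum_p\frac{1-f(p)}{p}<\infty,$$
which is the second assertion of the theorem.

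For the first assertion, set $S(x):=\sum_{m\le x,\,m\text{ odd}}f(m)$. The restriction of $f$ to odd integers is real, multiplicative, bounded by $1$, and pretends to $1$, so Wirsing's mean-value theorem gives $S(x)=cx+o(x)$ with
$$c=\frac{1}{2}\prod_{p\text{ odd}}\left(1-\frac{1}{p}\right)\left(1+\sum_{j\ge 1}\frac{f(p^j)}{p^j}\right).$$
Each odd-prime factor is bounded below by $(1-1/p)(1-1/(p-1))=(p-2)/p>0$, and the product converges by $\sum_p(1-f(p))/p<\infty$, so $c>0$. Factoring $n=2^j m$ with $m$ odd (so $f(n)=f(2^j)f(m)$ by multiplicativity) gives
$$\sum_{n\le N}f(n)=\sum_{j\ge 0}f(2^j)\,S\bigl(\lfloor N/2^j\rfloor\bigr)=cN\sum_{j\ge 0}\frac{f(2^j)}{2^j}+o(N).$$
Bounded partial sums force $\sum_{j\ge 0}f(2^j)/2^j=0$; since $f(1)=1$ and $\bigl|\sum_{j\ge 1}f(2^j)/2^j\bigr|\le\sum_{j\ge 1}2^{-j}=1$, equality requires $f(2^j)=-1$ for every $j\ge 1$.

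The main obstacle is the step that rules out a non-principal $\chi$: Section~\ref{bcc-sec} is written for stochastic completely multiplicative functions, whereas here one needs its conclusion for a deterministic, merely multiplicative $f$. The stochastic-to-deterministic passage is cosmetic (all probabilistic statements become certainties), but the factorisation $\g=\tilde{\boldsymbol{\chi}}\,n^{i\t}\,\h$ driving Section~\ref{bcc-sec} fails at prime powers $p^j$ with $j\ge 2$. One resolves this by writing $f=\tilde\chi\cdot h_{\mathrm{cm}}\cdot\rho$, where $\tilde\chi$ and $h_{\mathrm{cm}}$ are the completely multiplicative functions defined by the same formulas as in Section~\ref{bcc-sec} and $\rho$ is multiplicative with $\rho(p)=1$ at every prime (so $\rho$ takes values in $\{-1,+1\}$ and is supported on non-squarefree integers), and then restricting the shift parameter in Section~\ref{bcc-sec} to a positive-density set on which $\rho(n+m)=1$; this preserves the almost-periodicity argument at the cost of additional bookkeeping.
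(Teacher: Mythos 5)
Your derivation of $f(2^j)=-1$ from $\sum_p(1-f(p))/p<\infty$ via Wirsing's mean-value theorem is correct, clean, and genuinely different from the paper, which instead reads off $f(2^j)=h(2^j)=-1$ from its case analysis once $q=1$ is established. However, your derivation of the prerequisite $\sum_p(1-f(p))/p<\infty$ has a real gap at the step where you assert that ``if $\chi$ were non-principal, the generalised Borwein--Choi--Coons analysis of Section~\ref{bcc-sec} would produce a lower bound on the discrepancy.'' The obstruction is not, as you diagnose it, merely that the factorisation breaks on higher prime powers; it is that the singular series $\boldsymbol{\mathfrak S}=\prod_p\sum_{j\ge 0}h(p^j)/p^{j(1+1/\log X)}$ can vanish. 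With $h$ multiplicative and real, this happens exactly when $q$ is odd and $h(2^j)=-1$ for all $j\ge 1$, since the $p=2$ Euler factor $1-\tfrac12-\tfrac14-\cdots$ tends to zero -- and this is precisely the case that has to be engaged in order to distinguish $q=1$ (which is allowed, e.g.\ $f=\chi_2$) from odd $q>1$ (which must be excluded). Without the lower bound $|\boldsymbol{\mathfrak S}|\gg\log X$ the passage to \eqref{contra} collapses and no contradiction is produced. The paper's fix is the $\chi_2$ twist: replace $(\tilde\chi,h)$ by $(\chi_2\tilde\chi,\chi_2 h)$, whose singular series is again of order $\log X$, run the argument modulo $2\q^k$, and obtain the variant \eqref{contrad}, which still forces divergence when $q>1$ but is harmlessly bounded when $q=1$ because of the extra $\chi_2(m)$ factor.

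Your proposed workaround -- factoring $f=\tilde\chi\,h_{\mathrm{cm}}\,\rho$ with $h_{\mathrm{cm}}$ completely multiplicative and restricting shifts to a positive-density set with $\rho(n+m)=1$ -- does not repair this. The hypothesis of bounded discrepancy controls the \emph{unrestricted} inner sum $\sum_{m\le H'}f(n+m)$; discarding the $m$ with $\rho(n+m)=-1$ changes the quantity being squared, and there is no reason a restricted square-mean is $\ll\log X$. Moreover one cannot in general arrange $\rho(a+m)=1$ for all $m$ in a window of length $2H$ on any positive-density set of $a$: already for $f=\chi_2$ one computes $\rho(n)=-1$ whenever $v_2(n)$ is a positive even integer, so every window of length at least $8$ contains a bad shift. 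The perfect-cancellation identity \eqref{perf} is also destroyed by restricting $a$ to a proper subset of a full period. Finally, if the workaround did go through it would yield a contradiction even at $q=1$ (since the completely multiplicative singular series never vanishes), wrongly ruling out $f=\chi_2$ -- a sign that the $\rho$-correction cannot be localised away. The $\chi_2$ twist, or an equivalent device, is thus an essential missing ingredient in your route to $\sum_p(1-f(p))/p<\infty$.
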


Informally, this theorem asserts that the only multiplicative functions $f: \N \to \{-1,+1\}$ with bounded sums are ones which ``pretend'' to be $\chi_2$.
In \cite{coons}, it was shown that if $f: \N \to \{-1,+1\}$ is a multiplicative function with $f(2^j)=1$ for some natural number $j$, and one had the asymptotic $\sum_{p\leq x} f(p) = (c+o(1)) \log x$ for some $0 < c \leq 1$, then $\sup_n |\sum_{j=1}^n f(j)| = +\infty$.  This is implied by the above theorem.  It seems likely that one can carry the analysis further and conclude that the periodic examples given above are the complete list of multiplicative $f: \N \to \{-1,+1\}$ with $\sup_n |\sum_{j=1}^n f(j)| < +\infty$, but we will not do so here.

We sketch the proof of the above theorem as follows.  Suppose that we have a multiplicative function $f: \N \to \{-1,+1\}$ such that $|\sum_{j=1}^n f(j)| \leq C$ for all $n$ and some finite $C$.  Henceforth we allow implied constants to depend on $C$. Applying Proposition \ref{star} (ignoring the stochasticity and the $\eps$ parameter, and generalising from completely multiplicative functions to multiplicative functions as in Remark \ref{gil}), we see that for each $X \geq 1$, one can find a Dirichlet character $\chi$ of period $q = O(1)$ and a real number $t = O(X)$ such that
$$ \sum_{p \leq X} \frac{1 - \operatorname{Re} f(p) \overline{\chi}(p) p^{-it}}{p} \ll 1.$$
Actually, since $f$ is real-valued, we may take $t=0$ and $\chi$ to be real-valued by the triangle inequality argument in \cite[Appendix C]{mrt}.  Thus
$$ \sum_{p \leq x} \frac{1 - f(p) \chi(p)}{p} \ll 1.$$
Currently, $\chi$ is allowed to depend on $x$, but the number of possible $\chi$ is bounded independently of $x$, and so by the pigeonhole principle (or compactness) we can thus find a Dirichlet character $\chi$ of period $q = O(1)$ such that
$$ \sum_{p} \frac{1 - f(p) \chi(p)}{p} \ll 1.$$
As before, we may assume without loss of generality that $\chi$ is primitive.  We then factor
$$ f = \tilde \chi h$$
where $\tilde \chi$ is the multiplicative function such that $\tilde \chi(n) := \chi(n)$ for $(n,q)=1$ and $\tilde \chi(p^j) := f(p^j)$ for $p|q$ and $j \geq 1$, and $h = f / \tilde \chi$ is also multiplicative taking values in $\{-1,+1\}$, with $h(p^j)=1$ whenever $p|q$ and $j \geq 1$, and
\begin{equation}\label{hpo}
 \sum_{p} \frac{1 - h(p)}{p} \ll 1.
\end{equation}
We allow implied constants to depend on $q, \chi, h$.
Suppose first that $h(2^j) = +1$ for at least one natural number $j$.  Then the Euler factors $\sum_{j=0}^\infty \frac{h(p^j)}{p^j}$ are non-zero for every prime $p$ (the only dangerous case being $p=2$), and one can then check from \eqref{hpo} and Mertens' theorem that the singular series
$$ {\mathfrak S} := \prod_p \sum_{j=0}^\infty \frac{h(p^j)}{p^{j(1+1/\log X)}}$$
obeys the bounds
$$ \log X \ll {\mathfrak S} \ll \log X$$
for all sufficiently large $X$ (recall we allow implied constants to depend on $h$).  One can then check that the argument in Section \ref{bcc-sec} (ignoring the stochasticity, the $\eps$ parameter, and the complex conjugations, and setting $t$ to zero) continues to work with very little modification (even though $\tilde \chi$ and $h$ are now only multiplicative rather than completely multiplicative) to give the desired contradiction for suitable choices of parameters $H, k, X$ as before (the $\delta$ parameter is irrelevant, since Lemma \ref{tb} is automatic in this setting).  Thus we may assume that $h(2^j)=-1$ for all $j$, which implies in particular that $q$ is odd since $h(p)=+1$ for all $p|q$.

The Euler product ${\mathfrak S}$ now vanishes due to the $p=2$ factor, which prevents us from applying the arguments in Section \ref{bcc-sec} immediately.  To get around this, we now factor
$$ f = \tilde \chi' h'$$
where $\tilde \chi' := \chi_2 \tilde \chi$ and $h' := \chi_2 h$.  One can check that the singular series
$$ {\mathfrak S}' := \prod_p \sum_{j=0}^\infty \frac{h'(p^j)}{p^{j(1+1/\log X)}}$$
obeys the bounds
$$ \log X \ll {\mathfrak S}' \ll \log X$$
for sufficiently large $X$.  If $q \neq 1$, one can then run the previous arguments with $\tilde \chi$, $h$ replaced by $\tilde \chi'$, $h'$ respectively (and $q^k$ replaced by $2q^k$), arriving at the analogue
$$
\frac{1}{2q^k} \sum_{a \in [1,2q^k] \hbox{ good}} \frac{1}{H} \sum_{H < H' \leq 2H} \left|\sum_{m=1}^{H'} \tilde{\chi}'(a+m)\right|^2 \ll 1
$$
of \eqref{contra}. But we can write $\tilde \chi'(a+m) = -\chi_2(a) \chi_2(m) \tilde \chi(a+m)$, and hence
\begin{equation}\label{contrad}
\frac{1}{2q^k} \sum_{a \in [1,2q^k] \hbox{ good}} \frac{1}{H} \sum_{H < H' \leq 2H} \left|\sum_{m=1}^{H'} \chi_2(m) \tilde{\chi}(a+m)\right|^2 \ll 1
\end{equation}
and by repeating the rest of the arguments in Section \ref{bcc-sec} (carrying along the $\chi_2(m)$ and $2$ factors which end up being harmless) we again obtain a contradiction.  Thus $q=1$, and the theorem follows.

\section*{Acknowledgments} 

The author is supported by NSF grant DMS-0649473 and by a Simons Investigator Award. The author thanks Uwe Stroinski for suggesting a possible connection between Elliott-type results and the Erd\H{o}s discrepancy problem, leading to the blog post at {\tt terrytao.wordpress.com/2015/09/11} in which it was shown that a (non-averaged) version of the Elliott conjecture implied Theorem \ref{hilbert}.  Shortly afterwards, the author obtained the averaged version of that conjecture in \cite{tao-elliott}, which turned out to be sufficient to complete the argument.  The author also thanks Timothy Gowers for helpful discussions and encouragement, as well as Crist\'obal Camarero, Christian Elsholtz, Andrew Granville, Gergely Harcos, Gil Kalai, Joseph Najnudel, Royce Peng, Uwe Stroinski, and anonymous blog commenters for corrections and comments on the above-mentioned blog post and on other previous versions of this manuscript.  Finally, we thank the anonymous referee for a thorough reading of the manuscript and for many comments and corrections.



\begin{dajauthors}
\begin{authorinfo}[tt]
  Terence Tao\\
  Department of Mathematics, UCLA\\
  405 Hilgard Ave\\
  tao\imageat{}math.ucla.edu\\
	\url{https://www.math.ucla.edu/~tao}
\end{authorinfo}
\end{dajauthors}


\begin{thebibliography}{999}

\bibitem{bgs}
R. L. Bas, C. P. Gomes, B. Selman, \emph{On the Erd\H{o}s discrepancy problem}, preprint, \href{https://arxiv.org/abs/1407.2510}{\tt arXiv:1407.2510}

\bibitem{bcc}
P. Borwein, S. Choi, M. Coons, \emph{Completely multiplicative functions taking values in $\{-1,1\}$}, Trans. Amer. Math. Soc. \textbf{362} (2010), no. 12, 6279--6291. 

\bibitem{burgess}
D. A. Burgess, \emph{On character sums and primitive roots}, Proc. London Math. Soc. (3), \textbf{12} (1962), 179--192.

\bibitem{but}
Y. Buttkewitz, C. Elsholtz, \emph{Patterns and complexity of multiplicative functions}, J. Lond. Math. Soc. (2) \textbf{84} (2011), no. 3, 578--594.

\bibitem{chowla}
S. Chowla, The Riemann hypothesis and Hilbert's tenth problem, Gordon and Breach, New York, 1965.

\bibitem{coons}
M. Coons, \emph{On the multiplicative Erd\H{o}s discrepancy problem}, preprint. \href{https://arxiv.org/abs/1003.5388}{\tt arXiv:1003.5388}

\bibitem{cudakov}
N. G. \v{C}udakov, \emph{Theory of the Characters of Number Semigroups}, J. Ind. Math. Soc. \textbf{20} (1956), 11--15.

\bibitem{elliott}
P. D. T. A. Elliott, \emph{On the correlation of multiplicative functions}, {Notas Soc. Mat. Chile}, {Notas de la Sociedad de Matem\'atica de Chile}, \textbf{11} (1992), 1--11.

\bibitem{erdos}
P. Erd\H{o}s, \emph{Some unsolved problems}, Michigan Math. J. \textbf{4} (1957), 299--300.

\bibitem{gowers}
W. T. Gowers, \emph{Erd\H{o}s and arithmetic progressions},  Erd\H{o}s Centennial, Bolyai Society Mathematical Studies, 25, L. Lovasz, I. Z. Ruzsa, V. T. Sos eds., Springer 2013, pp. 265--287.

\bibitem{gs}
A. Granville, K. Soundararajan, \emph{Large character sums: pretentious characters and the P\'olya-Vinogradov theorem}, J. Amer. Math. Soc. \textbf{20} (2007), no. 2, 357--384. 

\bibitem{kl}
B. Konev, A. Lisitsa, \emph{Computer-aided proof of Erd\H{o}s discrepancy properties}, Artificial Intelligence \textbf{224} (2015), 103--118.

\bibitem{maier}
H. Maier, \emph{Chains of large gaps between consecutive primes}, Advances in Mathematics \textbf{39} (1981), 257--269.

\bibitem{mr}
K. Matom\"aki, M. Radziwi{\l}{\l}, \emph{Multiplicative functions in short intervals}, preprint. \href{https://arxiv.org/abs/1501.04585}{\tt arXiv:1501.04585}.

\bibitem{mr-short}
K. Matom\"aki, M. Radziwi{\l}{\l}, \emph{A note on the Liouville function in short intervals}, preprint. \href{https://arxiv.org/abs/1502.02374}{\tt arXiv:1502.02374}.

\bibitem{mrt}
K. Matom\"aki, M. Radziwi{\l}{\l}, T. Tao, \emph{An averaged form of Chowla's conjecture}, preprint. \href{https://arxiv.org/abs/1503.05121}{\tt arXiv:1503.05121}.

\bibitem{mont}
H. Montgomery, Ten lectures on the interface between analytic number theory and harmonic
analysis, volume 84 of CBMS Regional Conference Series in Mathematics. Published for the Conference
Board of the Mathematical Sciences, Washington, DC; by the American Mathematical
Society, Providence, RI, 1994.

\bibitem{mv}
H. Montgomery; R. Vaughan, Multiplicative number theory. I. Classical theory. Cambridge Studies in Advanced Mathematics, 97. Cambridge University Press, Cambridge, 2007.

\bibitem{polymath}
D.H.J. Polymath, \href{http://michaelnielsen.org/polymath1/index.php?title=The_Erd\%C5\%91s_discrepancy_problem}{The Erd\H os discrepancy problem}

\bibitem{schur}
I. Schur, G. Schur, \emph{Multiplikativ signierte Folgen positiver ganzer Zahlen}, Gesammelte Abhandlungen von Isaac Schur, Vol. 3, 392--399. Berlin, Heidelberg-New York, Springer 1973.


\bibitem{tao-elliott}
T. Tao, \emph{The logarithmically averaged Chowla and Elliott conjectures for two-point correlations}, preprint. \href{https://arxiv.org/abs/1509.05422}{\tt arXiv:1509.05422}

\end{thebibliography}
\end{document}